\documentclass[10pt]{article}
%
%
%  December 2, 2018
%
\usepackage{amsmath,amsthm,amsfonts,amssymb,url,authblk}
\newtheorem{theorem}{Theorem}[section]
\newtheorem{lemma}[theorem]{Lemma}
\newtheorem{corollary}[theorem]{Corollary}
\newtheorem{remark}{Remark}[section]

\newtheorem{example}{Example}[section]

\newcommand{\OA}{\ensuremath{\mathsf{OA}}}

\newcommand{\zed}{\ensuremath{\mathbb{Z}}} 
\newcommand{\C}{\ensuremath{\mathcal{C}}}

\title{Constructions of optimal orthogonal arrays with repeated rows}
\author{Charles J.\ Colbourn}
\affil{School of Computing, Informatics and Decision Systems Engineering,
Arizona State University,
Tempe, Arizona 85287,
U.S.A.}
\author{Douglas R.\ Stinson%
\thanks{D.R.\ Stinson's research is supported by  NSERC discovery grant RGPIN-03882.}}
\affil{David R.\ Cheriton School of Computer Science, University of Waterloo,
Waterloo, Ontario N2L 3G1, Canada}
\author{Shannon Veitch}
\affil{Dept.\ of Combinatorics and Optimization, University of Waterloo,
Waterloo, Ontario N2L 3G1, Canada}

\date{\today}

\begin{document}
\maketitle

\begin{abstract}
We construct  orthogonal arrays  $\OA_{\lambda} (k,n)$ (of strength two)
having a row that is repeated $m$ times, where $m$ is as large
as possible. In particular, we consider OAs where the ratio $m / \lambda$ is
as large as possible; these OAs are termed \emph{optimal}. We provide constructions of
optimal OAs for any $k \geq n+1$, albeit with large $\lambda$. We also study
\emph{basic} OAs; these are optimal OAs in which $\gcd(m,\lambda) = 1$. We  construct
a basic OA with $n=2$ and $k =4t+1$, provided that a  Hadamard matrix of order $8t+4$ exists.
This completely solves the problem of constructing basic OAs wth $n=2$, modulo the 
Hadamard matrix conjecture. 
\end{abstract}

\section{Introduction}
\label{OA2.sec}

Let $k \geq 2$, $n \geq 2$ and $\lambda \geq 1$ be integers.
An \emph{orthogonal array} $\OA_{\lambda} (k,n)$
is a $\lambda  n^2 $ by $k$ array, $A$, with entries from a set
$X$ of cardinality $n$ such that, within any two columns of $A$,
every ordered pair of symbols from $X$ occurs in exactly $\lambda$ rows of $A$.
%We often denote the number of rows, $\lambda  n^2$, by $N$.
For much information on orthogonal arrays, see \cite{HSS}.

In this note, we are interested in $\OA_{\lambda} (k,n)$ that contain a row
that is repeated $m$ times, where $m$ is as large as possible. 
 We observe that,
by relabelling the symbols in the orthogonal array, these $m$  rows can all be assumed
to be of the form $x \; x \cdots x$ for some fixed symbol $x$. We
will denote this particular symbol $x$ either by  $0$ or by $\infty$.

The following theorem, along with an elementary combinatorial proof, can be found in 
\cite[Theorem 2.2]{Sti18}. However, we should note that this result is also
an immediate consequence of a much more general result from \cite{Muk}.

\begin{theorem}
\label{repeated.thm}
Let $k \geq 2$, $n \geq 2$ and $\lambda \geq 1$ be integers.
If there is an $\OA_{\lambda}(k,n)$ containing a row that is repeated $m$ times, then
\[
m \leq \frac{\lambda n^2}{k(n-1)+1}.
\]
\end{theorem}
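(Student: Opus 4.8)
The plan is to prove the bound by a double-counting argument on the zero entries of the array, followed by an application of the Cauchy--Schwarz inequality. First I would relabel the symbols so that the repeated row is $0\,0\cdots 0$, occurring $m$ times. Recall the standard fact that in any $\OA_{\lambda}(k,n)$ each symbol occurs exactly $\lambda n$ times in each column: pairing a given column with any other, each of the $n$ ordered pairs whose first coordinate is $0$ occurs $\lambda$ times. For each row $r$, let $w_r$ denote the number of zeros in row $r$. Counting zeros column by column gives $\sum_r w_r = k\lambda n$, and counting, for each unordered pair of columns, the rows in which both entries equal $0$ gives $\sum_r \binom{w_r}{2} = \lambda \binom{k}{2}$, since the pair $(0,0)$ occurs in exactly $\lambda$ rows of any two columns.

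The crucial step is to isolate the $m$ repeated rows, each of which attains the maximum weight $w_r = k$. Subtracting their contribution from the two identities above yields $\sum_j w_j = k(\lambda n - m)$ and $\sum_j \binom{w_j}{2} = (\lambda - m)\binom{k}{2}$, where the sums now range over the $\lambda n^2 - m$ remaining rows. From these I would extract the second moment $\sum_j w_j^2 = k\lambda(k+n-1) - mk^2$. Applying Cauchy--Schwarz in the form $\left(\sum_j w_j\right)^2 \leq (\lambda n^2 - m)\sum_j w_j^2$ then produces a single polynomial inequality relating $m$, $\lambda$, $k$, and $n$.

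It remains to simplify this inequality. After expanding both sides, the leading term $k\lambda^2 n^2$ and the term $km^2$ cancel between them; dividing the surviving inequality through by $\lambda$ and then by the factor $(n-1)$ (legitimate since $n \geq 2$) collapses it to $m\,[k(n-1)+1] \leq \lambda n^2$, which is exactly the claimed bound. I expect the main obstacle to be twofold: recognizing that one must peel off the $m$ extremal (all-zero) rows \emph{before} invoking Cauchy--Schwarz, since applying it to all $\lambda n^2$ rows eliminates $m$ entirely and proves nothing; and then carrying out the algebraic reduction carefully, the key being that the coefficient of $m$ simplifies as $n^2 k + k + (n-1) - 2kn = (n-1)\,[k(n-1)+1]$. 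Finally, equality in Cauchy--Schwarz corresponds to all non-repeated rows having the same number of zeros, which I would expect to characterize the optimal arrays studied in the remainder of the paper.
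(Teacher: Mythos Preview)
Your argument is correct. The paper does not reproduce a proof of this theorem, instead citing the elementary combinatorial proof in \cite{Sti18}; however, the same two counting identities you use, $\sum a_i = k(\lambda n - m)$ and $\sum a_i(a_i-1) = k(k-1)(\lambda - m)$ over the non-repeated rows, appear verbatim in the proof of Theorem~\ref{repeated2.thm}, and the formula for $\overline{a}$ in Lemma~\ref{abar.lem} is precisely the mean arising in your Cauchy--Schwarz step. So your approach coincides with the intended one, and your observation about the equality case is exactly Lemma~\ref{abar.lem}.
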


An $\OA_{\lambda}(k,n)$, say $A$, containing a row 
that is repeated 
\begin{equation}
\label{m.eq}
m = \frac{\lambda n^2}{k(n-1)+1} 
\end{equation}
times will be termed \emph{optimal}. Another way to view the optimality property is
to observe that the ratio $m /\lambda$ is as large as possible in an optimal OA.

We note that, in a recent paper, Culus and  Toulouse \cite{CT} discuss an application where it is 
beneficial to construct optimal orthogonal arrays. They also construct several small examples of optimal OAs
using linear programs.

The rest of this paper is organized as follows. Section \ref{prelim.sec} establishes some basic results
about optimal OAs. In Section \ref{examples.sec}, we present some small examples of basic OAs, which are optimal 
OAs in which $\gcd(m,\lambda) = 1$. 
In Section \ref{basic.sec}, we give a 
complete solution (modulo the Hadamard matrix conjecture) to the the problem of constructing basic OAs with $n = 2$.
Section \ref{general.sec} gives constructions of optimal OAs for arbitrary values of $n$ and $k \geq n+1$.
Section \ref{delete.sec} examines the effect of deleting a small number of columns from an optimal OA.
Finally, Section \ref{summary.sec} is a brief summary.

\section{Preliminary results}
\label{prelim.sec}

From the proof of \cite[Theorem 2.2]{Sti18}, the following result can be 
derived immediately.

\begin{lemma}
\label{abar.lem}
\cite[Corollary 2.4]{Sti18}
Let $k \geq 2$, $n \geq 2$ and $\lambda \geq 1$ be integers.
Suppose there is an optimal $\OA_{\lambda}(k,n)$, say $A$, containing a row 
$0 \; 0 \cdots 0$ that is repeated $m$ times.
Then every other row of $A$ contains exactly 
\begin{equation}
\label{a.eq}
\overline{a} = \frac{k(\lambda n - m)}{\lambda n^2-m}
\end{equation} 
occurrences of the symbol $0$.
\end{lemma}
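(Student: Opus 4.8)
The plan is to reduce the statement to a single application of the Cauchy--Schwarz inequality, with the optimality hypothesis playing exactly the role of forcing the equality case. First I would delete the $m$ repeated rows $0\;0\cdots 0$ from $A$ to obtain an array $B$ with $N := \lambda n^2 - m$ rows and $k$ columns, and for each remaining row $r$ let $a_r$ denote the number of occurrences of $0$ in $r$. The goal becomes showing that every $a_r$ equals the quantity $\overline{a}$ in \eqref{a.eq}.

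The heart of the argument is a double counting of the first two moments of the $a_r$. For the first moment I would use the standard fact that in an $\OA_{\lambda}(k,n)$ each symbol occurs exactly $\lambda n$ times in every column (obtained by summing the strength-two property over a second column). Since $0$ accounts for $m$ of these occurrences through the repeated rows, it occurs $\lambda n - m$ times in each column of $B$; summing over the $k$ columns gives $\sum_r a_r = k(\lambda n - m)$, so the average number of zeros per row of $B$ is already exactly $\overline{a}$. For the second moment, fix an unordered pair of distinct columns: both entries equal $0$ in precisely $\lambda$ rows of $A$, hence in $\lambda - m$ rows of $B$; counting pairs of zero entries within rows in two ways yields $\sum_r \binom{a_r}{2} = \binom{k}{2}(\lambda - m)$, and therefore $\sum_r a_r^2 = k(k-1)(\lambda - m) + k(\lambda n - m)$.

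With both $\sum_r a_r$ and $\sum_r a_r^2$ pinned down, I would invoke Cauchy--Schwarz in the form $N \sum_r a_r^2 \geq \left(\sum_r a_r\right)^2$, with equality precisely when all the $a_r$ coincide. Substituting the two computed values, the difference of the two sides factors as $\lambda(n-1)\big[\lambda n^2 - m(k(n-1)+1)\big]$, so the inequality is literally equivalent to $m \leq \lambda n^2 / (k(n-1)+1)$; this is the point at which the proof of \cite[Theorem 2.2]{Sti18} produces the bound of Theorem \ref{repeated.thm}. Since $A$ is optimal, $m$ attains the value in \eqref{m.eq}, the inequality is an equality, the equality case of Cauchy--Schwarz applies, and every $a_r$ must equal the common mean $\sum_r a_r / N = k(\lambda n - m)/(\lambda n^2 - m) = \overline{a}$.

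I expect the only genuinely fiddly step to be verifying that the Cauchy--Schwarz inequality rearranges into the bound of Theorem \ref{repeated.thm}, i.e.\ confirming the factorization above; everything else is bookkeeping. The key conceptual observation, which is what makes this corollary ``immediate'' from the proof of Theorem 2.2, is simply that the bound of Theorem \ref{repeated.thm} arises from exactly this second-moment computation, so that optimality of $A$ is synonymous with the vanishing of the variance of the sequence $(a_r)$.
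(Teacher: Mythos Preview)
Your argument is correct and is precisely the variance/second-moment computation underlying the proof of \cite[Theorem 2.2]{Sti18} that the paper invokes (the same identities $\sum a_i = k(\lambda n-m)$ and $\sum a_i(a_i-1)=k(k-1)(\lambda-m)$ reappear verbatim in the paper's proof of Theorem~\ref{repeated2.thm}). The only slip is cosmetic: the Cauchy--Schwarz defect $N\sum a_r^2-(\sum a_r)^2$ actually factors as $k\,\lambda(n-1)\big[\lambda n^2-m(k(n-1)+1)\big]$, with an extra factor of $k$, but this does not affect the equality analysis.
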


We can  simplify the formula (\ref{a.eq}) for $\overline{a}$, as follows.

\begin{corollary}
\label{a.cor}
Let $k \geq 2$, $n \geq 2$ and $\lambda \geq 1$ be integers.
Suppose there is an optimal $\OA_{\lambda}(k,n)$, say $A$, containing a row 
$0 \; 0 \cdots 0$ that is repeated $m$ times.
Then every other row of $A$ contains exactly $(k-1)/n$
%\begin{equation}
%\label{a2.eq}
%\overline{a} = \frac{k-1}{n}
%\end{equation} 
occurrences of the symbol $0$ and thus $k \equiv 1 \bmod n$.
\end{corollary}

\begin{proof}
In an optimal $\OA_{\lambda}(k,n)$, equation (\ref{m.eq}) holds.
Suppose we substitute this expression for $m$ into equation (\ref{a.eq}).
We obtain
{\allowdisplaybreaks 
\begin{eqnarray*}
\overline{a} &=& \frac{k(\lambda n - m)}{\lambda n^2-m}\\
&=& \frac{k\left(\lambda n - \frac{\lambda n^2}{k(n-1)+1}\right)}{\lambda n^2-\frac{\lambda n^2}{k(n-1)+1}}\\
&=& \frac{k\left(1 - \frac{n}{k(n-1)+1}\right)}{n-\frac{n}{k(n-1)+1}}\\
&=& \frac{k(k(n-1)+1) -kn}{n(k(n-1)+1) - n}\\
&=& \frac{k^2n - k^2 +k-kn}{kn(n-1)}\\
&=& \frac{k(k-1)(n-1)}{kn(n-1)}\\
&=& \frac{k-1}{n}.
\end{eqnarray*}
}
\end{proof}

We will define a quadruple of positive integers $(m,\lambda,k,n)$  to be \emph{feasible}
if the following conditions are satisfied:
\begin{itemize}
\item $k \geq 2$ and $n \geq 2$,
\item $m = \frac{\lambda n^2}{k(n-1)+1}$, and
\item $\overline{a} = \frac{k-1}{n}$ is a positive  integer.
\end{itemize}
The above conditions are all necessary for the existence of an optimal
$\OA_{\lambda}(k,n)$.

\begin{lemma}
Suppose $(m,\lambda,k,n)$  is a feasible quadruple, $\ell \mid m$ and  $\ell \mid \lambda$,
where $\ell > 1$. Then $(m/\ell,\lambda/\ell,k,n)$  is a feasible quadruple.
\end{lemma}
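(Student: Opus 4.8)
The plan is to check the three defining conditions of feasibility for the rescaled quadruple $(m/\ell, \lambda/\ell, k, n)$ one at a time, after first confirming that its entries are positive integers. Since $\ell \mid m$ and $\ell \mid \lambda$, both $m/\ell$ and $\lambda/\ell$ are integers; and because $m \geq 1$ and $\lambda \geq 1$, the divisibility $\ell \mid m$, $\ell \mid \lambda$ forces $m \geq \ell$ and $\lambda \geq \ell$, so the two quotients are at least $1$. Thus $(m/\ell, \lambda/\ell, k, n)$ is a quadruple of positive integers, which is the setting in which feasibility is defined.

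The first and third feasibility conditions are immediate, because they do not involve $m$ or $\lambda$ at all. The bounds $k \geq 2$ and $n \geq 2$ are inherited unchanged from the original feasible quadruple, and the requirement that $\overline{a} = (k-1)/n$ be a positive integer is likewise identical, since $k$ and $n$ are untouched by the rescaling.

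The only condition that genuinely references $m$ and $\lambda$ is the middle one, $m = \lambda n^2 / (k(n-1)+1)$, and here the key step is to exploit its homogeneity of degree one in the pair $(m, \lambda)$. Starting from this equation for the original quadruple and dividing both sides by $\ell$ gives $m/\ell = (\lambda/\ell)\, n^2 / (k(n-1)+1)$, which is exactly the relation required of the rescaled parameters.

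I do not expect a genuine obstacle: the statement is in essence the observation that feasibility is scale-invariant under simultaneously dividing $m$ and $\lambda$ by a common factor. The one place demanding a moment of care — as opposed to difficulty — is the preliminary integrality check above, which ensures the rescaled entries remain positive integers before the defining equations are verified.
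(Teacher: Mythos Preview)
Your proposal is correct and follows the same approach as the paper: the core observation in both is that the defining equation $m = \lambda n^2/(k(n-1)+1)$ is homogeneous of degree one in $(m,\lambda)$, so it is preserved under division by a common factor $\ell$. The paper's proof is terser, recording only this equivalence and leaving the remaining checks (positivity of $m/\ell$ and $\lambda/\ell$, and the conditions on $k$ and $n$) implicit, whereas you spell them out explicitly.
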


\begin{proof} Define $m' = m/\ell$ and $\lambda' = \lambda / \ell$. 
It suffices to observe that
\[ m = \frac{\lambda n^2}{k(n-1)+1} \Leftrightarrow m' = \frac{\lambda' n^2}{k(n-1)+1}.\]
\end{proof}

If there exists an optimal $\OA_{\lambda'}(k,n)$
and we take $\ell$ copies of every row, then we obtain an optimal
$\OA_{\lambda}(k,n)$, where $\lambda = \ell \lambda'$. The most interesting parameter cases are those where
we cannot just take multiple copies of a smaller OA.
Therefore, we define a feasible quadruple $(m,\lambda,k,n)$ to be \emph{basic} if $\gcd (m,\lambda) = 1$.
Similarly, an optimal
$\OA_{\lambda}(k,n)$  is \emph{basic} if $\gcd (m,\lambda) = 1$.

As we already mentioned, an optimal OA is any $\OA_{\lambda}(k,n)$ that has an $m$-times
repeated row, where the ratio $m/\lambda$ is as large as possible. A
basic  OA is an optimal OA where the value of $\lambda$ (or equivalently, $m$) 
is as small as possible. Note that basic OAs include the case $m = \lambda= 1$, which 
of course do not contain repeated rows. 

The possible basic quadruples are quite constrained if $n$ is prime and $m > 1$.

\begin{theorem}
\label{prime.thm}
If $(m, \lambda, k, n)$ is a basic quadruple, $m > 1$ and $n$ is prime, 
then $m = n$. Further, $k = ns+1$ and $\lambda = (n-1)s + 1$ for some integer $s$
such that $\gcd(n,s-1) = 1$.
\end{theorem}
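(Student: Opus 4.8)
The plan is to reduce the entire statement to a single divisibility equation and then exploit the primality of $n$. First I would use the feasibility condition that $\overline{a} = (k-1)/n$ is a positive integer to write $k = ns+1$ for some integer $s \geq 1$; this already establishes the claimed form of $k$. Substituting $k = ns+1$ into the denominator appearing in the feasibility relation and simplifying gives the factorization $k(n-1)+1 = n\bigl(s(n-1)+1\bigr)$. Writing $D = s(n-1)+1$ for brevity, the relation $m = \lambda n^2 / (k(n-1)+1)$ then collapses to the single equation $mD = \lambda n$, which is the object I would analyze.

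Next I would split into cases according to the primality of $n$. Since $n$ is prime and $n \mid mD$, either $n \mid D$ or $\gcd(n, D) = 1$. In the first case, cancelling $n$ from $mD = \lambda n$ shows $m \mid \lambda$; together with the basic condition $\gcd(m, \lambda) = 1$ this forces $m = 1$, contradicting the hypothesis $m > 1$. Hence $\gcd(n, D) = 1$, and Euclid's lemma applied to $n \mid mD$ yields $n \mid m$. Writing $m = nm'$ and cancelling $n$ gives $\lambda = m'D$, whence $\gcd(m, \lambda) = \gcd(nm', m'D) = m'\gcd(n,D) = m'$. The basic condition then forces $m' = 1$, so $m = n$ and $\lambda = D = (n-1)s+1$, which are the remaining claims.

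Finally I would confirm the side condition $\gcd(n, s-1) = 1$. Reducing $D = s(n-1)+1$ modulo $n$ gives $D \equiv 1 - s \pmod{n}$, so $\gcd(n, D) = \gcd(n, s-1)$; the coprimality $\gcd(n, D) = 1$ established in the previous step is therefore exactly the stated condition, and no separate work is needed.

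The argument is essentially bookkeeping, so I do not anticipate a deep obstacle; the one place requiring care is the dichotomy on whether $n \mid D$. The crux is recognizing that the case $n \mid D$ is precisely what the hypothesis $m > 1$ excludes, and that this same dichotomy simultaneously pins down $m = n$ and delivers the coprimality of $n$ with $s-1$. Keeping the implications in the correct order — deriving $\gcd(n, D) = 1$ before concluding $n \mid m$ — is the only point where a misstep could occur.
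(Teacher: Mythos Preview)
Your argument is correct and follows essentially the same route as the paper: both write $k = ns+1$, reduce the feasibility relation to $m\bigl((n-1)s+1\bigr) = \lambda n$, and then use primality of $n$ together with $\gcd(m,\lambda)=1$ to force $m=n$ and $\lambda=(n-1)s+1$. The only cosmetic difference is that the paper argues directly that $m \mid \lambda n$ and $\gcd(m,\lambda)=1$ give $m \mid n$, whereas you reach the same conclusion via a case split on whether $n \mid D$; your route has the minor bonus of yielding $\gcd(n,D)=1$ (and hence $\gcd(n,s-1)=1$) as an automatic byproduct rather than as a separate final check.
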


\begin{proof}
We have that $m (k (n-1) + 1) = \lambda n^2$ from (\ref{m.eq}).
Further, $k = ns + 1$ for some integer $s$, by Corollary \ref{a.cor}. Therefore
\begin{eqnarray*}
\lambda n^2 &=& m ((ns+1)(n-1) + 1) \\
&=& m (n^2s - ns + n)\end{eqnarray*}
and hence
\[ \lambda n = m (ns -s + 1).\]
Then $m \mid \lambda n$ and $\gcd(m, \lambda) = 1$, so $m \mid n$. 
Since $n$ is prime and $m > 1$, we have $m = n$.

Referring again to (\ref{m.eq}), it follows that
\begin{eqnarray*}
\lambda &=& \frac{m (k(n-1) + 1)}{n^2} \\
&=& \frac{n  ((ns+1)(n-1) + 1)}{n^2} \\
&=& (n-1)s + 1.
\end{eqnarray*}

Finally, a basic quadruple requires that $\gcd(m,\lambda) = 1$.
Thus, it is necessary that $\gcd (n,(n-1)s + 1) = 1$, which simplifies to $\gcd(n,s-1) = 1$.
\end{proof}

%\noindent
%The case when $n$ is composite is similar, except there are multiple classes of values for $k$ and $\lambda$, each for a %distinct factor of $n$ greater than 1.

%We will focus our attention on basic quadruples for the remainder of this note.

\subsection{Some examples of basic OAs}
\label{examples.sec}

An optimal  $\OA_{\lambda}(k,n)$ has
a total of $\lambda n^2 = m(k(n-1)+1)$ rows. If we delete $m$ rows of the form
$0 \; 0 \cdots 0$, then the number of remaining rows is 
$mk(n-1)$, which is divisible by $k$. This suggests that we might attempt to construct
the $\OA_{\lambda}(k,n)$ by  cyclically rotating  $m(n-1)$ ``starting rows'' $k$ times.

We illustrate the technique in a small example.

\begin{example} We construct a basic  $\OA_{3}(5,2)$ from the following two  starting rows:
\[ \begin{array}{ccccc}
0& 0& 1 & 1& 1\\
0& 1& 0 & 1& 1
\end{array}
\]

We cyclically rotate these starting rows five times, and then adjoin $m=2$ rows of $0$'s.
This yields the desired orthogonal array, which is presented in Figure \ref{fig1}.

\begin{figure}[htb]
\[ \begin{array}{ccccc}
0& 0& 0 & 0& 0\\
0& 0& 0 & 0& 0\\ \hline
0& 0& 1 & 1& 1\\
1& 0& 0 & 1& 1\\
1& 1& 0 & 0& 1\\
1& 1& 1 & 0& 0\\
0& 1& 1 & 1& 0\\ \hline
0& 1& 0 & 1& 1\\
1& 0& 1 & 0& 1\\
1& 1& 0 & 1& 0\\
0& 1& 1 & 0& 1\\
1& 0& 1 & 1& 0
\end{array}
\]
\caption{A basic  $\OA_{3}(5,2)$}
\label{fig1}
\end{figure}

It is possible to verify that this process will yield an $\OA_{3}(5,2)$ 
without actually constructing the
whole array. It suffices to look all the ordered pairs that are (cyclically) a fixed distance 
apart in the starting rows. Each row has five entries, so we only have to consider pairs at 
distance one and two, because $\lfloor \frac{5}{2} \rfloor = 2$.
\begin{itemize}
\item For distance one, we have $00, 01,11,11,10$ from the first starting row and 
$01, 10,01,11,10$ from the second starting row. We see we have three occurrences of each of $01,10$ and $11$, and
one occurrence of $00$.
 \item For distance two, we have $01, 01,11,10,10$ from the first starting row and 
$00, 11,01,10,11$ from the second starting row. Again, we have three occurrences of each of $01,10$ and $11$, and
one occurrence of $00$.
\end{itemize}
This means that, when we rotate the starting rows and adjoin two rows of $0$'s, we are guaranteed to get
the desired orthogonal array.

Finally, we note that existence of a basic  $\OA_{3}(5,2)$ is also reported in \cite[Table 4]{CT}.

\end{example}

We now give starting rows for a few other small examples.

\begin{example} 
A basic  $\OA_{5}(9,2)$ with $m=2$ can be constructed from the following two  starting rows:
\[ \begin{array}{ccccccccc}
0& 0& 0 & 1& 0& 1& 1 & 1& 1\\
0& 0& 1 & 1& 0& 1& 0 & 1& 1
\end{array}
\]
Existence of a basic  $\OA_{5}(9,2)$ is also reported in \cite[Table 4]{CT}.
\end{example}

\begin{example} 
\label{OA7,13,2}
A basic  $\OA_{7}(13,2)$  with $m=2$ can be constructed from the following two  starting rows:
\[ \begin{array}{ccccccccccccc}
0& 0& 0 & 0& 1& 0& 1 & 1& 1& 0& 1 & 1& 1\\
0& 0& 1 & 0& 1& 1& 0 & 0& 1& 1& 1 & 0& 1
\end{array}
\]
\end{example}

\begin{example} 
A basic  $\OA_{9}(17,2)$  with $m=2$ can be constructed from the following two  starting rows:
\[ \begin{array}{ccccccccccccccccc}
0& 0& 0 & 0& 0& 1& 0 & 1& 1& 1& 0 & 1& 1& 0& 1 & 1& 1\\
0& 0& 0 & 1& 0& 1& 1 & 0& 1& 0& 1 & 1& 0& 0& 1 & 1& 1
\end{array}
\]
\end{example}

\begin{example} 
\label{ex5}
%A basic  $\OA_{5}(7,3)$  with $m=3$ can be constructed from the following six  starting rows:
%\[ \begin{array}{ccccccc}
%0 & 0 & 1 & 1 & 1 & 1 & 1 \\
%0 & 0 & 1 & 2 & 1 & 2 & 2 \\
%0 & 1 & 0 & 2 & 2 & 2 & 1 \\
%0 & 1 & 2 & 0 & 2 & 1 & 2 \\
%0 & 1 & 2 & 2 & 1 & 0 & 2 \\
%0 & 2 & 1 & 1 & 0 & 2 & 2 
%\end{array}
%\]
We use a slightly different technique to obtain a  basic  $\OA_{5}(7,3)$  with $m=3$.
We  have three  starting rows, consisting of symbols from the set $\{\infty\} \cup \zed_2$:
\[ \begin{array}{ccccccc}
\infty & \infty & 0 & 0 & 0 & 0 & 1\\
\infty & 1 & \infty & 0 & 1 & 1 & 0\\
\infty & 1 & 1 & \infty & 1 & 0 & 1
\end{array}
\]
First, cyclically rotate each starting row seven times. Then develop each row 
modulo $2$ (the point $\infty$ is fixed). Finally, adjoin three rows of $\infty$'s.
The resulting $3 \times 7 \times 2 + 3 = 45$ rows form a basic $\OA_{5}(7,3)$.

Existence of a basic  $\OA_{5}(7,3)$ is also reported in \cite[Table 4]{CT}.
\end{example}

\begin{example} 
\label{ex794}
We construct a  basic  $\OA_{7}(9,4)$  with $m=4$ from
four  starting rows, consisting of symbols from the set $\{\infty\} \cup \zed_3$:
\[ \begin{array}{ccccccccc}
 \infty & \infty & 0 & 0 & 0 & 0 & 1 & 0 & 2 \\
 \infty & 0 & \infty & 1 & 0 & 0 & 1 & 2 & 1 \\
 \infty & 0 & 1 & \infty & 1 & 1 & 0 & 1 & 2 \\
 \infty & 0 & 0 & 2 & \infty & 2 & 1 & 1 & 2
\end{array}
\]
First, cyclically rotate each starting row nine times. Then develop each row 
modulo $3$ (the point $\infty$ is fixed). Finally, adjoin four rows of $\infty$'s.
The resulting $4 \times 9 \times 3 + 4 = 112$ rows form a basic $\OA_{7}(9,4)$.
\end{example}

\section{Basic OAs for $n=2$}
\label{basic.sec}

In this section, we determine all the basic quadruples $(m,\lambda,k,2)$ with $m > 1$.
For these quadruples, we can construct a basic $\OA_{\lambda}(k,2)$ 
provided that a suitable Hadamard matrix exists.

%\begin{lemma}
%\label{L2.1} If $(m,\lambda,k,2)$ is a feasible quadruple, then $k$ is odd.
%\begin{proof}
%Since $n = 2$, we have $m = 4 \lambda / (k+1)$.
%We require that \[\overline{a} = \frac{k(\lambda n - m)}{\lambda n^2-m}\] is an integer. 
%Substituting $n = 2$ and $m = 4 \lambda / (k+1)$ into this formula, we see that
%$\overline{a} = (k-1)/2$, so $k$ is odd.
%\end{proof}
%\end{lemma}

\begin{lemma}
\label{basicn=2} If $(m,\lambda,k,2)$ is a basic quadruple with $m > 1$, then
$m=2$. Further, $\lambda = 2t+1$ and $k = 4t+1$ for some positive integer $t$.
\end{lemma}

\begin{proof}
Take $n=2$ in Theorem \ref{prime.thm}. Then $m=2$ and
we have $k=2s+1$ and $\lambda = s+1$, where $\gcd(2, s-1) = 1$.
Hence $s$ is even. Writing $s=2t$, we obtain $\lambda = 2t+1$ and $k = 4t+1$.
\end{proof}

\begin{theorem}
\label{BIBDtoOA.thm}
There exists a basic $\OA_{2t+1}(4t+1,2)$ 
if and only if there is a $(4t+1,2t+1,2t+1)$-BIBD.
\end{theorem}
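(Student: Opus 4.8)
The plan is to set up a direct correspondence between the non-repeated rows of the OA and the blocks of the BIBD: a binary row of length $4t+1$ is matched with the subset of the $4t+1$ points (indexed by the columns) on which the row takes the value $1$. First I would record the structural data. By Lemma~\ref{basicn=2} a basic $\OA_{2t+1}(4t+1,2)$ has $m=2$, so it has $\lambda n^2 = 8t+4$ rows in total, of which two are the repeated all-zero row and $8t+2$ are non-repeated; by Corollary~\ref{a.cor} each non-repeated row contains exactly $(k-1)/n = 2t$ zeros, hence exactly $2t+1$ ones. On the design side, a $(4t+1,2t+1,2t+1)$-BIBD has $4t+1$ points, blocks of size $2t+1$, and pair index $2t+1$, and a standard count then forces replication number $4t+2$ and exactly $8t+2$ blocks, matching the $8t+2$ non-repeated rows.

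For the forward direction I would discard the two all-zero rows and convert each of the remaining $8t+2$ rows into the block consisting of its $1$-positions, yielding $8t+2$ blocks of size $2t+1$ on the $4t+1$ points. The only substantive property to verify is pair-balance: for two points $i,j$ the number of blocks containing both equals the number of rows having $1$ in both columns $i$ and $j$, and the all-zero rows contribute nothing here. The strength-two property states exactly that the ordered pair $(1,1)$ occurs in columns $i,j$ in precisely $\lambda = 2t+1$ rows, so every pair of points lies in exactly $2t+1$ blocks, giving the required BIBD.

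For the converse I would reverse this: take the incidence vector of each block as a binary row of length $4t+1$, and adjoin two all-zero rows, for a total of $8t+2+2 = 8t+4 = \lambda n^2$ rows. To check strength two, fix columns $i,j$ and write $\lambda_{ij}$ for the number of blocks containing both points and $r_i,r_j$ for the replication numbers; the four ordered-pair counts are then $\lambda_{ij}$ for $(1,1)$, $r_i-\lambda_{ij}$ for $(1,0)$, $r_j-\lambda_{ij}$ for $(0,1)$, and $8t+4-r_i-r_j+\lambda_{ij}$ for $(0,0)$, the last absorbing the two adjoined rows. Substituting the BIBD values $\lambda_{ij}=2t+1$ and $r_i=r_j=4t+2$ makes each count equal to $2t+1 = \lambda$, so the array is an $\OA_{2t+1}(4t+1,2)$; its all-zero row appears exactly $m=2$ times (blocks being nonempty), which is optimal since $\lambda n^2/(k(n-1)+1)=2$, and $\gcd(2,2t+1)=1$ makes it basic.

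I do not expect a single hard obstacle, since the correspondence carries essentially all the content; the one point that needs care is the bookkeeping in the converse, where the replication value $r=4t+2$ forced by the BIBD parameters is precisely what balances the $(1,0)$, $(0,1)$ and $(0,0)$ patterns once the $(1,1)$ pattern has been pinned to $\lambda$ by the pair index. One minor point worth flagging is that two distinct non-repeated rows may share the same $1$-set, corresponding to a repeated block; this is permissible in a BIBD, which need not be simple, so the correspondence is unaffected in either direction.
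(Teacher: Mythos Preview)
Your proposal is correct and follows essentially the same approach as the paper: identify the non-repeated rows of the OA with the incidence vectors of the blocks of the BIBD, and verify the four ordered-pair counts using $\lambda_{ij}=2t+1$ and $r=4t+2$, invoking Corollary~\ref{a.cor} (equivalently Lemma~\ref{abar.lem}) to control the block sizes. Your explicit checks that the array is optimal and basic, and your remark about repeated blocks being permissible, are minor additions beyond what the paper writes out but are entirely in line with its argument.
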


\begin{proof} First suppose that a $(4t+1,2t+1,2t+1)$-BIBD exists. This BIBD  has
$b = 8t+2$ blocks and replication number $r = 4t+2$. Let $M$ be the 
$b$ by $v$ incidence matrix of this BIBD. Construct the matrix
\[ A = \left( \begin{array}{c} 
0\; 0 \cdots 0\\
0\; 0 \cdots 0\\
M
\end{array} \right) . \]
We claim that $A$ is a basic $\OA_{2t+1}(4t+1,2)$.
Clearly the first two rows are identical, so $m=2$ and we just need to verify that $A$ is an OA with the
stated parameters.

Choose any two distinct columns of $A$. These columns correspond to two points in the BIBD, say $x$ and $y$,
where $x \neq y$.
The number of occurrences of $1\; 1$ in these two columns is $\lambda = 2t+1$.
The number of occurrences of $0\; 1$ is $r - \lambda = 2t+1$, as is the number of
occurrences of $1\; 0$. Finally, the number of occurrences of $0\; 0$ is
$2 + 8t+2 - 3(2t+1) = 2t+1$.

Conversely, suppose $A$ is  a basic  $\OA_{2t+1}(4t+1,2)$.
We can assume that the symbols in the OA are $0$ and $1$. 
Without loss of generality, suppose that there are $m=2$ rows consisting entirely of zeroes, and
then delete them, creating a $8t+2$ by $4t+1$ matrix $M$. We will show that $M$ is
the incidence matrix of a $(4t+1,2t+1,2t+1)$-BIBD. 

Clearly $M$ is the incidence matrix of a set system on $4t+1$ points.
Given any two points $x$ and $y$, the number of blocks containing these two points
is the same as the number of occurrences of $1\; 1$ in the two associated columns of the OA,
which is $2t+1$. To complete the proof, we show that every block in the set system
has size $2t+1$. This can be seen easily by recalling from Lemma \ref{abar.lem}  that every row of $A$ contains
exactly $\overline{a}  = (4t+1 -1)/2 = 2t$ zeroes. 
Therefore, the number of ones in a row of $A$ is $4t+1 - \overline{a} = 2t+1$. 
This completes the proof.
\end{proof}

Now we show how to construct a basic $\OA_{2t+1}(4t+1,2)$ from a certain Hadamard matrix.
We use a few standard results, all of which can be found in \cite{Stinson}, for example.

\begin{theorem}
If there exists a Hadamard matrix of order $8t+4$, then there exists a basic $\OA_{2t+1}(4t+1,2)$.
% $(4t+1,2t+1,2t+1)$-BIBD.
\end{theorem}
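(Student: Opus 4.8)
The plan is to combine the equivalence established in Theorem~\ref{BIBDtoOA.thm} with a standard construction of symmetric designs from Hadamard matrices. By Theorem~\ref{BIBDtoOA.thm}, it suffices to produce a $(4t+1, 2t+1, 2t+1)$-BIBD from a Hadamard matrix of order $8t+4$. Observe that the parameters $(v, k, \lambda) = (4t+1, 2t+1, 2t+1)$ satisfy $v = 2k - 1$ and $\lambda = k$, so this design is exactly the complement of a symmetric $2$-$(4t+1, 2t, 2t-1)$ design, which in turn is the classical parameter set of a \emph{Hadamard design} of order $2t+1$. Thus the entire problem reduces to the well-known fact that a Hadamard matrix of order $4N$ yields a symmetric $2$-$(4N-1, 2N-1, N-1)$ design; here I would take $N = 2t+1$, so that $4N = 8t+4$ matches the hypothesis and $4N-1 = 8t+3$.

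First I would normalize the Hadamard matrix $H$ of order $8t+4$ so that its first row and first column consist entirely of $+1$'s; this is achieved by negating rows and columns as needed, which preserves the Hadamard property. Next I would delete the first row and first column to obtain a $(8t+3) \times (8t+3)$ matrix. Replacing each $+1$ by $1$ and each $-1$ by $0$ gives a $0$-$1$ matrix $N$, and the orthogonality of distinct rows of $H$ translates into the statement that any two distinct rows of $N$ agree in a $1$ in exactly $2t$ positions. This is precisely the incidence structure of a symmetric $2$-$(8t+3, 4t+1, 2t)$ design. I would cite \cite{Stinson} for this standard derivation rather than reproving the row-orthogonality computation in detail.

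At this point the design I have has $v = 8t+3$ points, not $4t+1$, so a further step is needed. The key move is to fix a block $B_0$ of the symmetric design and take its \emph{residual} or \emph{derived} design. The derived design, obtained by restricting all other blocks to the points of $B_0$, has $v' = k = 4t+1$ points, block size $k' = \lambda = 2t$, and index $\lambda' = \lambda - 1 = 2t-1$; this is a $2$-$(4t+1, 2t, 2t-1)$ design. Taking its complementary design (replacing each block by its set-theoretic complement among the $4t+1$ points) yields a $2$-$(4t+1, 2t+1, 2t+1)$ design, which is the required $(4t+1, 2t+1, 2t+1)$-BIBD. Each of these operations — forming residual/derived designs of a symmetric design, and complementing a design — is standard and I would invoke \cite{Stinson} for the parameter arithmetic rather than verifying it line by line.

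The main obstacle to watch for is getting the chain of parameter transformations exactly right, since there are three standard design-theoretic operations composed in sequence (Hadamard-to-symmetric-design, derived design, complement), and a single arithmetic slip in tracking $(v, k, \lambda)$ through each step would invalidate the conclusion. In particular I must confirm that the derived design is well-defined with the claimed parameters (which relies on the symmetry of the Hadamard design, guaranteeing any two blocks meet in exactly $\lambda$ points) and that complementation produces index $r - 2\lambda + v'$-type quantities that land on $2t+1$. Once the parameter bookkeeping is verified, the result follows immediately by appealing to Theorem~\ref{BIBDtoOA.thm}.
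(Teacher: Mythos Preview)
Your proposal is correct and follows essentially the same route as the paper: Hadamard matrix of order $8t+4$ $\Rightarrow$ symmetric $(8t+3,4t+1,2t)$-BIBD $\Rightarrow$ derived $(4t+1,2t,2t-1)$-BIBD $\Rightarrow$ complementary $(4t+1,2t+1,2t+1)$-BIBD $\Rightarrow$ basic $\OA_{2t+1}(4t+1,2)$ via Theorem~\ref{BIBDtoOA.thm}. The only difference is that you spell out the standard Hadamard-to-design construction and flag the parameter bookkeeping explicitly, whereas the paper simply cites these as well-known and moves on.
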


\begin{proof}
It is well-known that a Hadamard matrix of order $8t+4$ is equivalent to a 
symmetric $(8t+3,4t+1,2t)$-BIBD. The derived BIBD is a
$(4t+1,2t,2t-1)$-BIBD. If we then complement every block in this BIBD, we obtain
a $(4t+1,2t+1,2t+1)$-BIBD. Finally, apply Theorem \ref{BIBDtoOA.thm}.
\end{proof}

It is known that Hadamard matrices exist for all orders $n \equiv 0 \bmod 4$, 
$4 \leq n < 668$, and it is conjectured that Hadamard matrices exist for all orders $n \equiv 0 \bmod 4$, $n \geq 4$.

\section{General constructions for optimal OAs}
\label{general.sec}

Suppose we fix $k$ and $n$, where $k \geq n+1$.
Denote $\rho = n^2 / (k(n-1)+1)$; then $\rho \leq 1$.
Our goal is to find an optimal $\OA_{\lambda}(k,n)$ for some value of $\lambda$.
Note that $m / \lambda = \rho$ in such an OA.
Also,  $\overline{a} = (k-1)/n \geq 1$.

One possible approach would be to take all possible $k$-tuples that contain precisely
$\overline{a} = (k-1)/n$ occurrences of $0$, and then adjoin an appropriate number of
rows consisting entirely of $0$'s.

We illustrate the idea using a small example.

\begin{example}
\label{ex6}
Suppose we take  $k=7$ and $n=3$.
Here we have $\rho = 3/5$ and $\overline{a} = 2$.
There are \[\binom{7}{2} \times 2^5 = 21 \times 32 = 672\] 
$7$-tuples on the symbol set $\{0,1,2\}$ that
contain precisely two zeroes. If we then 
adjoin $48$ rows of $0$'s, it is not hard to check that
we obtain an optimal $\OA_{80}(7,3)$. The ratio $\rho = m / \lambda = 48/80 = 3/5$,
as required.

Of course, we know from Example \ref{ex5} that a basic $\OA_{5}(7,3)$ exists.
The value of $\lambda$ in the above-constructed optimal $\OA_{80}(7,3)$ is much larger.
\end{example}

As in Example \ref{ex6}, we take all possible $k$-tuples that contain precisely $\overline{a} = (k-1)/n$ occurrences of 0. In order to have an orthogonal array, any two columns must contain every ordered pair of symbols exactly $\lambda$ times, for some $\lambda$. Since every possible $k$-tuple containing $\overline{a}$ $0$'s
is used, it suffices to consider the first two columns. If these two columns contain every ordered pair the same number of times, then so will every other pair of columns in the array. 

There are four cases to consider for the first two elements in a  row: $00$, $0x$, $x0$, and $xy$, where $x, y \in \{1,\dots,n-1\}$. We consider  these cases in turn.

For each $x \in \{1, \dots, n-1\}$, there are
\begin{equation}
\label{0x.eq}
\binom{k-2}{\overline{a}-1}(n-1)^{k-\overline{a}-1} 
\end{equation}
rows beginning with $0x$. This result is the same for rows beginning with $x0$. For each $x,y \in \{1, \dots, n-1\}$, there are
\begin{equation}
\label{xy.eq}
\binom{k-2}{\overline{a}}(n-1)^{k-\overline{a}-2} 
\end{equation}
rows beginning with $xy$. Since $\overline{a} = (k-1)/n$, we find that
\begin{equation}
\label{eq6}
\frac{k-\overline{a}-1}{n-1} = \frac{k-1 - \frac{k-1}{n}}{n-1} 
= \frac{k-1}{n} 
= \overline{a}.
\end{equation}
Using (\ref{eq6}), it is now easy to show that  (\ref{0x.eq}) and (\ref{xy.eq}) are equal:
\begin{eqnarray*}
\binom{k-2}{\overline{a}}(n-1)^{k-\overline{a}-2} &=& 
\frac{k - \overline{a} - 1}{\overline{a}}
\binom{k-2}{\overline{a}-1} \frac{(n-1)^{k-\overline{a}-1}}{n-1} \\
& = & \binom{k-2}{\overline{a}-1}(n-1)^{k-\overline{a}-1}.
\end{eqnarray*}

Therefore, each ordered pair other than $00$ appears the same number of times
(given by (\ref{0x.eq}) or (\ref{xy.eq})), which we denote by
$\lambda$. Thus, adjoining the appropriate number of rows consisting entirely of $0$'s will result in an orthogonal array.

The number of rows beginning with $00$ is  
\[
{k-2 \choose \overline{a}-2}(n-1)^{k-\overline{a}}.
\]
It then follows that we need to adjoin 
\[m = \lambda - {k-2 \choose \overline{a}-2}(n-1)^{k-\overline{a}}\]
rows of $0$'s.
Substituting the value of $\lambda$ obtained from (\ref{0x.eq}), we have
\begin{eqnarray*}
m &=& \binom{k-2}{\overline{a}-1}(n-1)^{k-\overline{a}-1} 
- {k-2 \choose \overline{a}-2}(n-1)^{k-\overline{a}}\\
&=& \binom{k-2}{\overline{a}-1}(n-1)^{k-\overline{a}-1} -
\frac{\overline{a}-1}{k - \overline{a}}\binom{k-2}{\overline{a}-1}(n-1)(n-1)^{k-\overline{a}-1} \\
&=& \binom{k-2}{\overline{a}-1}(n-1)^{k-\overline{a}-1} \left( 1 - \frac{(n-1)(\overline{a}-1)}{k - \overline{a}} \right)\\
& = & \lambda \left( 1 - \frac{(n-1)(\overline{a}-1)}{k - \overline{a}} \right).
\end{eqnarray*}
Therefore, substituting $\overline{a} = (k-1)/n$, we have
\begin{eqnarray*}
\frac{m}{\lambda} &=&  1 - \frac{(n-1)(\overline{a}-1)}{k - \overline{a}}\\
& = & 1 - \frac{(n-1)(\frac{k-n-1}{n})}{k - \frac{k-1}{n}}\\
&=& 1 - \frac{(n-1)(k-n-1)}{kn-k+1}\\
&=& \frac{n^2}{k(n-1)+1}\\
&=& \rho,
\end{eqnarray*}
as desired.

Thus we have proven the following result.

\begin{theorem}
\label{general.thm}
Suppose $n^2 \leq k(n-1)+1$ and suppose $\overline{a} = (k-1)/n$ is an integer. 
Then there is an optimal
$\OA_{\lambda}(k,n)$, where
\begin{equation}
\label{general.eq}
 \lambda = \binom{k-2}{\overline{a}-1}(n-1)^{k-\overline{a}-1}.
 \end{equation}
 \end{theorem}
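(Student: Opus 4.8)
The plan is to prove the theorem by exhibiting the array explicitly. I would take as rows \emph{all} $k$-tuples over the symbol set $\{0,1,\dots,n-1\}$ that contain exactly $\overline{a} = (k-1)/n$ occurrences of the symbol $0$, of which there are $\binom{k}{\overline{a}}(n-1)^{k-\overline{a}}$, and then repair the single deficient ordered pair by adjoining a suitable number $m$ of all-zero rows. The first observation is that this collection of rows is invariant under every permutation of the $k$ coordinate positions, so the joint distribution of ordered pairs in a chosen pair of columns is the same for every pair of columns. Hence it suffices to analyze the first two columns: if every ordered pair occurs equally often there, the orthogonal-array property follows for all pairs of columns at once.

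First I would partition the rows according to their first two entries into the four types $00$, $0x$, $x0$, and $xy$ with $x,y$ nonzero, and in each case count the completions of the remaining $k-2$ coordinates by how many zeros they must still carry. This yields $\binom{k-2}{\overline{a}-2}(n-1)^{k-\overline{a}}$ rows of type $00$; for each fixed nonzero $x$, the count $\binom{k-2}{\overline{a}-1}(n-1)^{k-\overline{a}-1}$ of (\ref{0x.eq}) for type $0x$, and the same for $x0$ by symmetry; and for each fixed $x,y$ the count $\binom{k-2}{\overline{a}}(n-1)^{k-\overline{a}-2}$ of (\ref{xy.eq}) for type $xy$.

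The key step, and the only place where the hypothesis $\overline{a}=(k-1)/n$ is genuinely used, is to show that the $0x$ count equals the $xy$ count, so that \emph{every} ordered pair other than $00$ occurs exactly $\lambda := \binom{k-2}{\overline{a}-1}(n-1)^{k-\overline{a}-1}$ times. Taking the ratio of (\ref{xy.eq}) to (\ref{0x.eq}) collapses, after simplifying the binomial and power factors, to the single identity $\frac{k-\overline{a}-1}{n-1}=\overline{a}$, which is precisely (\ref{eq6}) and follows directly from $\overline{a}=(k-1)/n$. I expect this identity to be the heart of the matter: it is exactly what forces all off-diagonal pairs into perfect balance, and without the arithmetic condition on $\overline{a}$ the two counts would differ and no adjustment by zero-rows could fix it.

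Finally I would treat the diagonal pair $00$ and confirm optimality. The number of rows of type $00$ is $\binom{k-2}{\overline{a}-2}(n-1)^{k-\overline{a}}$, which does not exceed $\lambda$, so adjoining $m = \lambda - \binom{k-2}{\overline{a}-2}(n-1)^{k-\overline{a}}$ all-zero rows brings the count of $00$ up to $\lambda$ as well, producing an $\OA_{\lambda}(k,n)$ with an $m$-times repeated row. To verify that this OA is optimal, I would compute $m/\lambda = 1 - \frac{(n-1)(\overline{a}-1)}{k-\overline{a}}$ and, substituting $\overline{a}=(k-1)/n$, reduce it to $n^2/(k(n-1)+1)=\rho$. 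The hypothesis $n^2 \le k(n-1)+1$ guarantees $\rho \le 1$, so $m$ is a nonnegative integer not exceeding $\lambda$; moreover $m/\lambda$ then attains the extremal value permitted by Theorem \ref{repeated.thm}, so equation (\ref{m.eq}) holds and the constructed OA is optimal, with $\lambda$ as claimed.
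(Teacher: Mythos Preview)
Your proposal is correct and follows essentially the same approach as the paper's own proof: the same explicit construction (all $k$-tuples with exactly $\overline{a}$ zeros plus adjoined all-zero rows), the same symmetry reduction to the first two columns, the same four-case count, the same key identity $(k-\overline{a}-1)/(n-1)=\overline{a}$ to equate the $0x$ and $xy$ counts, and the same final computation of $m/\lambda=\rho$ to confirm optimality.
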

 
 \subsection{An improvement}

We next show that the $\OA_{\lambda}(k,n)$ constructed in Theorem \ref{general.thm}
can be partitioned into $n-1$ optimal  
$\OA_{\lambda/(n-1)}(k,n)$. In order to describe how this is done, it is useful to
change the set of symbols on which the OAs are defined. Suppose we begin with the above-mentioned
$\OA_{\lambda}(k,n)$, constructed on symbols $0, \dots , n-1$. Delete the 
$m = \lambda n^2 / (k(n-1)+1)$ rows
of $0$'s.
Then we replace all occurrences of $0$ in the remaining rows by $\infty$, 
and the symbols $1, \dots , n-1$ are replaced
by $0, \dots , n-2$, respectively. We consider the new symbol set as $\{\infty\} \cup \zed_{n-1}$. 
Denote the resulting array by $A$.

For any row $\mathbf{r}$ of $A$, let $s(\mathbf{r})$ denote the sum of the non-infinite
elements in row $\mathbf{r}$, reduced modulo $n-1$. Then, for any $i \in \{ 0,\dots, n-2\}$,
let $A_i$ consist of all the rows $\mathbf{r}$ of $A$ such that $s(\mathbf{r}) = i$.
Clearly every row of $A$ is in precisely one of $A_0, \dots , A_{n-2}$. 

It is obvious that every $A_i$ is fixed by any permutation of the columns. Therefore,
the number of occurrences of a particular pair of symbols in two given columns does 
not depend on the two columns that are chosen. Hence, we can restrict our attention
to the first two columns of these arrays.

For two (not necessarily distinct) symbols 
$x,y \in \{\infty\} \cup \zed_{n-1}$ and for $0 \leq i \leq n-2$,
let $\lambda_i(x,y)$ denote the number of occurrences of the ordered pair $(x,y)$ in the
first two columns of $A_i$.  Also, let
$\lambda(x,y)$ denote the number of occurrences of $(x,y)$ in the
first two columns of $A$. Therefore
\[ \lambda(x,y) = 
\begin{cases} \lambda & \text{if $(x,y) \neq (\infty,\infty)$} \\
\lambda - m & \text{if $(x,y) =(\infty,\infty)$,}
\end{cases}
\]
where \[ \lambda = \binom{k-2}{\overline{a}-1}(n-1)^{k-\overline{a}-1}\]
and 
\[m = \frac{\lambda n^2 }{ k(n-1)+1}.\]

We will show, for any $x,y$, that $\lambda_i(x,y)$  is independent of $i$.
First however, we state and prove a small technical lemma.

\begin{lemma}
\label{tech.lem}
Suppose that $n \geq 2$, $\overline{a} = (k-1)/n$ is an integer and $\overline{a} \geq 1$.
Then $\overline{a} +2 \leq k-1$ unless $n=2$, $k=3$ and $\overline{a} = 1$.
\end{lemma}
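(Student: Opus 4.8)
The plan is to eliminate $k$ using the integrality hypothesis. Since $\overline{a} = (k-1)/n$ is assumed to be an integer, we have the clean relation $k - 1 = n\overline{a}$. I would substitute this directly into the target inequality $\overline{a} + 2 \leq k - 1$, which rewrites it as $\overline{a} + 2 \leq n\overline{a}$, equivalently as the single product inequality $(n-1)\overline{a} \geq 2$. The whole lemma then reduces to deciding exactly when this one inequality can fail, given only $n \geq 2$ and $\overline{a} \geq 1$.

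Next I would split on the value of $n$. When $n \geq 3$ we have $n - 1 \geq 2$, and combined with $\overline{a} \geq 1$ this forces $(n-1)\overline{a} \geq 2$, so the inequality holds with no exceptions in this range. The only remaining possibility is $n = 2$, where $(n-1)\overline{a} = \overline{a}$ and the requirement collapses to $\overline{a} \geq 2$. Hence the inequality fails precisely when $n = 2$ and $\overline{a} = 1$ simultaneously. I would then translate this single failing case back through $k - 1 = n\overline{a} = 2\cdot 1$ to obtain $k = 3$, matching exactly the excluded triple $n = 2$, $k = 3$, $\overline{a} = 1$ named in the statement.

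The argument is essentially a one-line reduction followed by a two-case check, so I do not anticipate any real obstacle. The only point deserving a moment of care is to verify that the excluded case is genuine rather than vacuous: one should confirm that $n = 2$, $\overline{a} = 1$ is consistent with the hypotheses (it yields $k = 3$, with $\overline{a} = (k-1)/n = 1$ indeed an integer), so the exception cannot be dropped and the statement is sharp.
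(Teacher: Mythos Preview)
Your proof is correct and follows essentially the same approach as the paper: both reduce the target inequality, via $k-1=n\overline{a}$, to the product condition $(n-1)\overline{a}\geq 2$ and then do a short case analysis. The only cosmetic difference is that the paper splits on $\overline{a}$ (first $\overline{a}\geq 2$, then $\overline{a}=1$ with $n\geq 3$) while you split on $n$; your version is arguably the more streamlined of the two.
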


\begin{proof}
First, suppose $\overline{a} \geq 2$, so $(k-1)/n \geq 2$.
The following inequalities are equivalent:
\begin{eqnarray*}
2 + \overline{a} & \leq & k-1\\
2 + \frac{k-1}{n} & \leq & k-1\\
\frac{(k-1)(n-1)}{n} & \geq & 2.
\end{eqnarray*}
However, 
\[ \frac{(k-1)(n-1)}{n} \geq 2(n-1) \geq 2 \]
because $n \geq 2$.

Now, suppose $\overline{a} =1$ and $n \geq 3$, We have $k = n+1$, so it follows that 
$\overline{a} + 2 = 3 \leq n = k-1$.
\end{proof}

\begin{remark}
\label{1.rem}
The exception to Lemma \ref{tech.lem} is $n=2$, $k=3$ and $\overline{a} = 1$.
But this is a trivial case, as a basic $\OA_{1}(3,2)$ exists, and this OA can
trivially be ``partitioned'' into $n-1 = 1$ OAs.
\end{remark}

In the following discussion, we assume that $n \geq 2$, $\overline{a} = (k-1)/n$ 
is an integer, $\overline{a} \geq 1$,
and $(n,k) \neq (2,3)$.

We next define a mapping $f$ on the rows of $A$.
Let $\mathbf{r}$ be any row of $A$, where $\mathbf{r} = (x_1, \dots , x_k)$.
Let $j_0 = \min \{ j : 3 \leq j \leq k, x_j \neq \infty\}$
(there are $\overline{a}$ occurrences of $\infty$ in $\mathbf{r}$, so 
Lemma \ref{tech.lem} ensures that $\{ j : 3 \leq j \leq k, x_j \neq \infty\} \neq \emptyset$
and hence $j_0$ exists).

Let $\kappa \in \zed_{n-1}$. Define $f(\mathbf{r}) = (y_1, \dots, y_k)$, where
\[ y_j = \begin{cases}
x_j + \kappa \bmod (n-1)& \text{if $j = j_0$,}\\
x_j & \text{otherwise},
\end{cases}
\]
for $j = 1, \dots , k$.

The  process above can also be described as follows. Find the first entry in row $\mathbf{r}$, 
past the second column, that is not equal to $\infty$. Then add $\kappa$ modulo $n-1$ to that entry.

It is clear that the mapping $f$ gives a bijection from the rows in $A_i$ to the rows in
$A_{i+\kappa \bmod (n-1)}$,
for $i \in \zed_{n-1}$. Also,  $f$ leaves the points in the first two columns of any row of $A$ 
unaltered. Since
\[ \sum_{i \in \zed_{n-1}} \lambda_i(x,y) = \lambda(x,y),\]
we have $\lambda_i(x,y) = \lambda(x,y)/(n-1)$ for all $i \in \zed_{n-1}$. Therefore,
if we adjoin $m/(n-1)$ rows of $\infty$'s to any $A_i$, we obtain an optimal 
$\OA_{\lambda}(k,n)$, where
\[ \lambda = \binom{k-2}{\overline{a}-1}(n-1)^{k-\overline{a}-2}.\]

The above discussion, along with Remark \ref{1.rem}, proves the following.

\begin{theorem}
\label{partition.thm}
Suppose $n^2 \leq k(n-1)+1$ and suppose $\overline{a} = (k-1)/n \geq 1$ is an integer. 
Then there is an optimal
$\OA_{\lambda}(k,n)$, where
\[ \lambda = \binom{k-2}{\overline{a}-1}(n-1)^{k-\overline{a}-1},\]
that can be partitioned into $n-1$ optimal $\OA_{\lambda/(n-1)}(k,n)$.
\end{theorem}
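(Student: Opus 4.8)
The plan is to prove Theorem~\ref{partition.thm} by exhibiting an explicit partition of the array constructed in Theorem~\ref{general.thm} and showing that each part, after adjoining the appropriate number of constant rows, is itself an optimal OA. First I would set up the relabelled array $A$ exactly as in the preamble: start from the $\OA_{\lambda}(k,n)$ of Theorem~\ref{general.thm} on symbols $0,\dots,n-1$, delete the $m$ rows of $0$'s, replace $0$ by $\infty$ and shift $1,\dots,n-1$ down to $0,\dots,n-2$, so the symbol set becomes $\{\infty\}\cup\zed_{n-1}$. I would then define, for each row $\mathbf{r}$, the quantity $s(\mathbf{r})$ to be the sum modulo $n-1$ of its non-infinite entries, and let $A_i=\{\mathbf{r}: s(\mathbf{r})=i\}$ for $i\in\zed_{n-1}$. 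Since every row lands in exactly one $A_i$, the sets $A_0,\dots,A_{n-2}$ partition the rows of $A$.

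The core of the argument is to show that each $A_i$ carries every ordered pair of symbols in its first two columns the same number of times, namely $\lambda(x,y)/(n-1)$. Because the construction uses \emph{all} $k$-tuples with exactly $\overline{a}$ zeroes, each $A_i$ is invariant under column permutations, so I only need to analyze the first two columns; this is the reduction already set up before the lemma. I would then introduce the bijection $f$ that locates the first coordinate past column~$2$ that is not $\infty$ (call its index $j_0$) and adds a fixed $\kappa\in\zed_{n-1}$ to that single entry modulo $n-1$. The key observations are that $f$ shifts $s(\mathbf{r})$ by exactly $\kappa$, hence maps $A_i$ bijectively onto $A_{i+\kappa}$, and that $f$ never touches columns $1$ or $2$, hence preserves the ordered pair appearing there. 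Varying $\kappa$ over $\zed_{n-1}$ therefore shows all the $A_i$ agree on first-two-column pair counts, and summing over $i$ gives $\lambda_i(x,y)=\lambda(x,y)/(n-1)$ for every $x,y$.

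The main obstacle, and the reason Lemma~\ref{tech.lem} is stated first, is guaranteeing that $j_0$ actually exists: the map $f$ is only well defined if every row has a non-infinite entry among columns $3,\dots,k$. A row has exactly $\overline{a}$ occurrences of $\infty$, so the index set $\{j:3\le j\le k,\ x_j\neq\infty\}$ is nonempty precisely when $\overline{a}+2\le k-1$, which is exactly what Lemma~\ref{tech.lem} provides, with the sole exception $(n,k)=(2,3)$. I would handle that exception by invoking Remark~\ref{1.rem}, where the trivial basic $\OA_{1}(3,2)$ already partitions into $n-1=1$ pieces. I must also confirm that the counts come out to the claimed $\lambda/(n-1)$ with $\lambda=\binom{k-2}{\overline{a}-1}(n-1)^{k-\overline{a}-1}$, so that each $A_i$ together with $m/(n-1)$ adjoined rows of $\infty$'s has index $\lambda/(n-1)$ and satisfies~\eqref{m.eq}; this last point is just bookkeeping since $\lambda$ and $m$ are both divisible by $n-1$ by construction.

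To finish, I would note that the pair-count equality for all column pairs, together with the adjoined constant rows, certifies each part as an $\OA_{\lambda/(n-1)}(k,n)$ containing a row repeated $m/(n-1)$ times; optimality of each part is immediate because the ratio $m/\lambda=\rho$ is unchanged by scaling both numerator and denominator by $1/(n-1)$, so~\eqref{m.eq} still holds. Combining the generic case with the exceptional case settled by Remark~\ref{1.rem} yields the theorem in full.
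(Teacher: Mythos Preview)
Your proposal is correct and follows essentially the same approach as the paper: the same relabelling to $\{\infty\}\cup\zed_{n-1}$, the same partition of the non-constant rows by the residue $s(\mathbf{r})$, the same reduction to the first two columns via column-permutation invariance, the same bijection $f$ that adds $\kappa$ to the first non-infinite entry past column~$2$, the same appeal to Lemma~\ref{tech.lem} to guarantee $j_0$ exists, and the same handling of the exceptional case $(n,k)=(2,3)$ via Remark~\ref{1.rem}. Your added remarks about divisibility of $\lambda$ and $m$ by $n-1$ and about preservation of the ratio $m/\lambda$ are the natural finishing checks that the paper leaves implicit.
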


\begin{corollary}
\label{improved.thm}
Suppose $n^2 \leq  k(n-1)+1$ and suppose $\overline{a} = (k-1)/n \geq 1$ is an integer. 
Then there is an optimal
$\OA_{\lambda}(k,n)$, where
\[ \lambda = \binom{k-2}{\overline{a}-1}(n-1)^{k-\overline{a}-2}.\]
\end{corollary}

\begin{remark}
We have noted that $\overline{a} = 1$ when $k = n+1$.
A basic OA with these parameters is in fact an $\OA_{1}(n+1,n)$, which is 
equivalent to a projective plane of order $n$. %Such an OA does not contain
%any ``repeated'' rows, as $m=1$. 
However, in cases where a projective plane of order $n$
is known not to exist, Corollary \ref{improved.thm} provides examples of 
$\OA_{\lambda}(n+1,n)$ for certain large values of $\lambda$.
\end{remark}

\subsection{A further improvement}

We now prove an extension of Theorem \ref{partition.thm} where we can sometimes reduce the
value of $\lambda$ by additional factors of $n-1$, depending on the parameters $k$ and $n$.

Suppose we can write $k = k_1+ \dots + k_{\gamma}$, where $k_1, \dots , k_{\gamma}$ are integers
such that $k_i \geq \overline{a}+3$ for $1 \leq i \leq \gamma$.
Evidently, we can take \[\gamma = \left\lfloor \frac{k}{\overline{a}+3}\right\rfloor.\]
As before, our starting point is the optimal $\OA_{\lambda}(k,n)$ obtained from
Theorem \ref{general.thm} in which the symbol set is $\{\infty\} \cup \zed_{n-1}$.
The value of $\lambda$ is given by (\ref{general.eq})
and there are $m$ rows consisting only of $\infty$'s. Delete these $m$ rows
and call the resulting array $A$.

Now, we consider the columns of $A$ to be partitioned into $\gamma$ \emph{classes}, 
where the $i$th class consists of $k_i$ columns.  Denote these classes as
$\C_1,  \dots , \C_{\gamma}$.

For any row $\mathbf{r}$ of $A$, let $s(\mathbf{r}) = (s_1, \dots , s_{\gamma})$,
where, for $1 \leq i \leq \gamma$, $s_i$ is the modulo $n-1$ sum of the non-infinite elements in 
row  $\mathbf{r}$ in the 
columns in $\C_i$. We will say that the $\gamma$-tuple 
$s(\mathbf{r})$ is the \emph{type} of row $\mathbf{r}$.

Then, for any possible type $\tau \in (\zed_{n-1})^{\gamma}$,
let $A_{\tau}$ consist of all the rows $\mathbf{r}$ of $A$ such that $s(\mathbf{r}) = \tau$.
(That is, $A_{\tau}$ comprises all the rows of $A$ having type $\tau$.)
This yields a partition of the rows of $A$ into $(n-1)^{\gamma}$ subsets.
We will show that each $A_i$, when augmented with an appropriate number of
rows of $\infty$'s, is an optimal $\OA_{\lambda}(k,n)$, where
\[ \lambda = \binom{k-2}{\overline{a}-1}(n-1)^{k-\overline{a}-\gamma -1}.
\] The number of rows of $\infty$'s to be added to each $A_{\tau}$ to construct
an orthogonal array is $m/(n-1)^{\gamma}$.

%\vspace{.1in}

We use a ``bijection'' proof similar to Theorem \ref{partition.thm}.
First, we note that any  permutation of the columns within any column class $\C_i$
is an automorphism of every $A_{\tau}$. Therefore, to prove that $A_{\tau}$
yields an orthogonal array (as described above), we just need to consider the ordered
pairs of symbols occurring in pairs
of columns of the following types:
\begin{description}
\item[(a)] the first two columns of any $\C_i$, and
\item[(b)] the first column of $\C_i$ and the the first column of $\C_j$, where $i \neq j$.
\end{description}

We can use a bijection similar to  Theorem \ref{partition.thm}, but we apply  the bijection
independently for every column class $\C_i$. 
Let $\mathbf{r}$ be any row of $A$, where $\mathbf{r} = (x_1, \dots , x_k)$.
For each column class $\C_i$, let $j_i$ be the first column within $\C_i$, 
past the second column, that is not equal to $\infty$ (note that $j_i$ exists because
$k_i \geq \overline{a}+3$).

Fix any $\gamma$-tuple  $\kappa \in (\zed_{n-1})^{\gamma}$.
Define the mapping $f$ as follows: $f(\mathbf{r}) = (y_1, \dots, y_k)$, where
\[ y_j = \begin{cases}
x_j + \kappa_i \bmod (n-1)& \text{if $j = j_i$ for some $i$,}\\
x_j & \text{otherwise},
\end{cases}
\]
for $j = 1, \dots , k$.

The mapping $f$ is a bijection from the rows in $A_{\tau}$ to the rows in
$A_{\tau + \kappa \bmod (n-1)}$,
for any  $\tau \in (\zed_{n-1})^{\gamma}$. 
Also, for any row $\mathbf{r}$, $f$ leaves the points in the first two columns of every $\C_i$
unaltered. Therefore,  the ordered pairs occurring in pairs of columns 
of types {\bf (a)} and {\bf (b)} in any $A_{\tau}$ is independent of $\tau$. 
Hence, we obtain the following theorem and corollary.

\begin{theorem}
\label{partition2.thm}
Suppose $n^2 \leq k(n-1)+1$ and suppose $\overline{a} = (k-1)/n$ is an integer. 
Suppose \[\gamma =   \left\lfloor \frac{k}{\overline{a}+3}\right\rfloor \geq 1.\]
Then there is an optimal
$\OA_{\lambda}(k,n)$, where
\[ \lambda = \binom{k-2}{\overline{a}-1}(n-1)^{k-\overline{a}-1},\]
that can be partitioned into $(n-1)^{\gamma}$ optimal $\OA_{\lambda/(n-1)^{\gamma}}(k,n)$.
\end{theorem}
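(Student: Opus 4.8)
The plan is to formalize the construction already outlined before the statement. First I would start from the optimal $\OA_{\lambda}(k,n)$ furnished by Theorem \ref{general.thm}, relabel its symbols as $\{\infty\}\cup\zed_{n-1}$, and delete the $m$ rows of $\infty$'s to obtain the array $A$ consisting of \emph{all} $k$-tuples over $\{\infty\}\cup\zed_{n-1}$ that contain exactly $\overline{a}$ occurrences of $\infty$. I would then record that the column partition is achievable: since $\gamma=\lfloor k/(\overline{a}+3)\rfloor$, we have $\gamma(\overline{a}+3)\le k$, so writing $k=k_1+\dots+k_\gamma$ with each $k_i\ge\overline{a}+3$ (placing the surplus $k-\gamma(\overline{a}+3)<\overline{a}+3$ into one class) partitions the columns into classes $\C_1,\dots,\C_\gamma$ as required. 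Defining the type $s(\mathbf{r})\in(\zed_{n-1})^\gamma$ gives the partition $A=\bigsqcup_\tau A_\tau$ into $(n-1)^\gamma$ blocks, and the target is: adjoining $m/(n-1)^\gamma$ rows of $\infty$'s to each $A_\tau$ yields an optimal $\OA_{\lambda'}(k,n)$ with $\lambda'=\lambda/(n-1)^\gamma$.

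The core of the argument is a symmetry step followed by a bijection. Because $A$ is the complete set of $k$-tuples with $\overline{a}$ infinities and the type is a symmetric function of the columns inside each class, any permutation of the columns within a single class $\C_i$ is an automorphism of every $A_\tau$; this reduces checking that $A_\tau$ is an OA to counting ordered pairs only in the column pairs of types (a) and (b). Next I would justify the bijection $f$. For each class the index $j_i$ (the first non-$\infty$ entry of $\C_i$ strictly past its second column) exists precisely because a row has only $\overline{a}$ infinities in total: columns $3,\dots,k_i$ of $\C_i$ cannot all be $\infty$ once $k_i-2\ge\overline{a}+1$, i.e. $k_i\ge\overline{a}+3$. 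Adding $\kappa_i\bmod(n-1)$ to the entry in position $j_i$ keeps that entry in $\zed_{n-1}$, preserves both the total number of infinities and the value of $j_i$, and shifts $s_i$ by $\kappa_i$; hence $f$ is a bijection $A_\tau\to A_{\tau+\kappa}$ with inverse the shift by $-\kappa$. Crucially, $f$ never touches the first two columns of any class, so it fixes every entry appearing in a pair of columns of type (a) or (b).

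Finally I would assemble the count. Letting $\lambda_\tau(x,y)$ denote the number of occurrences of $(x,y)$ in a fixed type-(a) or type-(b) column pair of $A_\tau$, the bijection $f$ gives $\lambda_\tau(x,y)=\lambda_{\tau+\kappa}(x,y)$ for every $\kappa$, so $\lambda_\tau(x,y)$ is independent of $\tau$; summing over the $(n-1)^\gamma$ types yields $\lambda_\tau(x,y)=\lambda(x,y)/(n-1)^\gamma$. Since $\lambda(x,y)=\lambda$ for $(x,y)\ne(\infty,\infty)$ and $\lambda(\infty,\infty)=\lambda-m$, adjoining $m/(n-1)^\gamma$ rows of $\infty$'s raises the $(\infty,\infty)$ count to $\lambda/(n-1)^\gamma$ and makes every ordered pair appear $\lambda/(n-1)^\gamma$ times; by the within-class symmetry this holds for every pair of columns, so the augmented $A_\tau$ is an $\OA_{\lambda'}(k,n)$ with $\lambda'=\lambda/(n-1)^\gamma$. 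Optimality follows by dividing $m=\lambda n^2/(k(n-1)+1)$ (valid by Theorem \ref{general.thm}) by $(n-1)^\gamma$, and the $(n-1)^\gamma$ augmented blocks partition the original array because the total number of adjoined $\infty$-rows is $(n-1)^\gamma\cdot m/(n-1)^\gamma=m$. I expect the main obstacle to be the well-definedness of $f$ — verifying that $j_i$ always exists and that $f$ leaves the type-(a)/(b) columns untouched — since this is exactly where the hypothesis $k_i\ge\overline{a}+3$ (equivalently the choice of $\gamma$) is forced, playing the role that Lemma \ref{tech.lem} played in the case $\gamma=1$.
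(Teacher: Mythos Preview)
Your proposal is correct and follows essentially the same approach as the paper: start from the array $A$ of all $k$-tuples with exactly $\overline{a}$ infinities, partition the columns into $\gamma$ classes each of size at least $\overline{a}+3$, use within-class column symmetry to reduce to pairs of types (a) and (b), and then apply the classwise ``shift at the first non-$\infty$ entry past column two'' bijection $f$ to show that the pair counts are independent of~$\tau$. Your write-up is in fact slightly more explicit than the paper's in justifying that the column partition exists and that each $j_i$ is well-defined, but the underlying argument is identical.
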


\begin{corollary}
\label{improved2.thm}
Suppose $n^2 \leq  k(n-1)+1$ and suppose $\overline{a} = (k-1)/n \geq 1$ is an integer. 
Suppose \[\gamma =   \left\lfloor \frac{k}{\overline{a}+3}\right\rfloor \geq 1.\]
Then there is an optimal
$\OA_{\lambda}(k,n)$, where
\[ \lambda = \binom{k-2}{\overline{a}-1}(n-1)^{k-\overline{a}-\gamma - 1}.\]
\end{corollary}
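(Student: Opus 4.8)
The plan is to obtain this corollary as an immediate consequence of Theorem \ref{partition2.thm}, which has just been proved. Note first that the hypotheses are identical: $n^2 \leq k(n-1)+1$, $\overline{a} = (k-1)/n$ is an integer (with $\overline{a}\geq 1$), and $\gamma = \lfloor k/(\overline{a}+3)\rfloor \geq 1$. Theorem \ref{partition2.thm} supplies an optimal $\OA_{\lambda'}(k,n)$ with $\lambda' = \binom{k-2}{\overline{a}-1}(n-1)^{k-\overline{a}-1}$, together with a partition of it into the $(n-1)^{\gamma}$ pieces $A_{\tau}$ indexed by types $\tau \in (\zed_{n-1})^{\gamma}$, where each piece, after adjoining $m/(n-1)^{\gamma}$ rows of $\infty$'s, is itself an optimal $\OA_{\lambda'/(n-1)^{\gamma}}(k,n)$.

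The argument is then simply to select one block of this partition. I would fix any type $\tau \in (\zed_{n-1})^{\gamma}$ and take the augmented array $A_{\tau}$; by Theorem \ref{partition2.thm} this is already an optimal $\OA_{\lambda}(k,n)$ with $\lambda = \lambda'/(n-1)^{\gamma}$. It remains only to rewrite this value in the claimed form, which is a one-line exponent subtraction: $\lambda = \lambda'/(n-1)^{\gamma} = \binom{k-2}{\overline{a}-1}(n-1)^{k-\overline{a}-1-\gamma}$, matching the statement exactly.

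There is essentially no obstacle here, since all the substantive content—the independence-of-$\tau$ property established via the column-class bijection $f$—resides in the proof of Theorem \ref{partition2.thm}. The one point worth confirming is that the exponent $k-\overline{a}-\gamma-1$ is nonnegative, so that $\lambda$ is a well-defined positive integer. This follows from $\gamma = \lfloor k/(\overline{a}+3)\rfloor$: since each column class carries at least $\overline{a}+3$ columns we have $\gamma(\overline{a}+3)\leq k$, whence $\gamma \leq k/(\overline{a}+3)$, and a short check (using $\overline{a}\geq 1$ and $k \geq \overline{a}+3$) shows $k/(\overline{a}+3)\leq k-\overline{a}-1$, so indeed $\gamma \leq k-\overline{a}-1$. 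Together with $\binom{k-2}{\overline{a}-1} > 0$ (as $0 \leq \overline{a}-1 \leq k-2$), this guarantees $\lambda \geq 1$, completing the proof.
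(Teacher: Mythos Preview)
Your proposal is correct and matches the paper's approach: the paper presents Corollary~\ref{improved2.thm} as an immediate consequence of Theorem~\ref{partition2.thm} with no separate proof, exactly as you derive it by selecting one block of the partition and simplifying the exponent. Your extra check that $k-\overline{a}-\gamma-1 \geq 0$ is a welcome sanity verification that the paper omits.
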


\begin{example}
Suppose we take $k = 16$ and $n=3$. Then $\overline{a} = 5$ and $\gamma = 2$.
From Corollary \ref{improved2.thm}, we obtain an optimal $\OA_{\lambda}(k,n)$,
where \[ \lambda = \binom{14}{4}2^{8}.\]
\end{example}

\section{Deleting columns from optimal OAs}
\label{delete.sec}

An optimal $\OA_{\lambda}(k,n)$ can exist only when $k \equiv 1 \bmod n$. However,
it is certainly of interest to determine the maximum possible ratio 
$m/\lambda$ for values of $k$ and $n$ where $k \not\equiv 1 \bmod n$. 
We might reasonably expect that deleting a small number of columns, say $s$ columns, 
from an optimal $\OA_{\lambda}(k,n)$
could yield an $\OA_{\lambda}(k-s,n)$ where the ratio $m/\lambda$ is as large as possible.

\subsection{Deleting a single column}

We first prove that this approach works for $s=1$ by proving 
a modification of Theorem \ref{repeated.thm}. The modified bound 
uses a similar proof technique to \cite[Theorem 3.1]{St82}
(see also \cite[Theorem 8.7]{Stinson}).
%\cite[Theorem 2.4]{Sti18}. 

%Our modified bound is
%useful in studying the cases where $k \equiv 0 \bmod{n}$.

\begin{theorem}
\label{repeated2.thm}
Let $k \geq 2$, $n \geq 2$, $\lambda \geq 1$ be integers. 
Suppose $A$ is an $\OA_{\lambda}(k,n)$ containing a row $0 \; 0 \cdots 0$ that is repeated $m$ times, 
and let $\alpha$ be a positive integer.
Then
\begin{equation}
\label{modified.eq}
m \leq \frac{\lambda(k(k-1) - 2\alpha k n + (\alpha^2+\alpha)n^2)}{k(k-1)-2\alpha k+\alpha^2+\alpha}.
\end{equation}
Further, equality occurs if and only if every row has either $\alpha$ or $\alpha+1$ occurrences of the symbol 0.
\end{theorem}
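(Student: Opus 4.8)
The plan is to prove this by a variance-style double-counting argument on the number of zeros per row, in the spirit of the Rao-type bound referenced from \cite{St82,Stinson}. For each row $\mathbf{r}$ of $A$, let $a_{\mathbf{r}}$ denote the number of occurrences of the symbol $0$ in that row. First I would record two ``moment'' identities obtained by counting incidences between rows and columns. Counting zeros column by column gives $\sum_{\mathbf{r}} a_{\mathbf{r}} = \lambda k n$, since each of the $k$ columns contains the symbol $0$ exactly $\lambda n$ times. Counting ordered pairs of distinct columns that are simultaneously $0$ in a row gives $\sum_{\mathbf{r}} a_{\mathbf{r}}(a_{\mathbf{r}}-1) = \lambda k(k-1)$, because each of the $k(k-1)$ ordered pairs of distinct columns contains $(0,0)$ in exactly $\lambda$ rows. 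Finally, the total number of rows is $\sum_{\mathbf{r}} 1 = \lambda n^2$.

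The key observation is that, since each $a_{\mathbf{r}}$ is a non-negative integer, the quantity $(a_{\mathbf{r}} - \alpha)(a_{\mathbf{r}} - \alpha - 1)$ is a product of two consecutive integers and is therefore $\geq 0$, with equality precisely when $a_{\mathbf{r}} \in \{\alpha, \alpha+1\}$. Expanding $(a_{\mathbf{r}} - \alpha)(a_{\mathbf{r}} - \alpha - 1) = a_{\mathbf{r}}(a_{\mathbf{r}}-1) - 2\alpha a_{\mathbf{r}} + (\alpha^2 + \alpha)$ and summing over all rows, the three moment identities collapse the total to
\[
\sum_{\mathbf{r}} (a_{\mathbf{r}} - \alpha)(a_{\mathbf{r}} - \alpha - 1) = \lambda\bigl(k(k-1) - 2\alpha k n + (\alpha^2+\alpha)n^2\bigr),
\]
which is exactly $\lambda$ times the numerator of (\ref{modified.eq}).

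To bring $m$ into the picture, I would split this sum according to whether a row is one of the $m$ repeated all-zero rows or not. Each all-zero row has $a_{\mathbf{r}} = k$, so it contributes $(k-\alpha)(k-\alpha-1) = k(k-1) - 2\alpha k + \alpha^2 + \alpha$, which is exactly the denominator of (\ref{modified.eq}); the $m$ such rows thus contribute $m$ times that denominator. Every remaining row contributes a non-negative term. Hence
\[
m\bigl(k(k-1)-2\alpha k + \alpha^2+\alpha\bigr) \leq \lambda\bigl(k(k-1) - 2\alpha k n + (\alpha^2+\alpha)n^2\bigr),
\]
and dividing through by the coefficient of $m$ yields (\ref{modified.eq}). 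Equality holds exactly when the contribution of the non-repeated rows vanishes, i.e.\ when each such row has either $\alpha$ or $\alpha+1$ occurrences of $0$.

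I expect the difficulties here to be bookkeeping rather than conceptual. The step that truly matters is the separation of the $m$ constant rows from the rest: the bound on $m$ depends on isolating those rows, since the three moment identities by themselves are independent of $m$ and only certify that the numerator is non-negative. A secondary technical point is the sign of the coefficient $(k-\alpha)(k-\alpha-1)$; the displayed bound is valid as written only when this is positive (the useful regime $\alpha \leq k-2$), and when it vanishes the inequality should be read as $m\cdot 0 \le \lambda\cdot(\text{numerator})$, which is vacuous, rather than divided through. The equality characterization should likewise be understood as applying to the non-repeated rows, the all-zero rows being excluded since they carry $k$ zeros.
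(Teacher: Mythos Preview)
Your proof is correct and follows essentially the same variance-style argument as the paper: the paper sums $(a_i-\alpha)(a_i-\alpha-1)$ only over the $N-m$ non-repeated rows using the identities $\sum a_i = k(\lambda n - m)$ and $\sum a_i(a_i-1)=k(k-1)(\lambda-m)$, whereas you sum over all rows and then split off the $m$ all-zero contributions, but the algebra is identical. Your remarks on the sign of $(k-\alpha)(k-\alpha-1)$ and on the equality clause referring only to the non-repeated rows are well taken and in fact sharpen points the paper leaves implicit.
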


\begin{proof}
Suppose the last $m$ rows of $A$ are $0 \; 0 \cdots 0$.
Let $a_i$ denote the number of occurrences of the symbol $0$ in row $i$ of $A$. Define
$N = \lambda n^2$. It is clear 
from elementary counting that 
\begin{eqnarray*}
\sum _{i=1}^{N-m} a_i &=& k(\lambda n - m) \quad \text{and}  \\
\sum _{i=1}^{N-m} a_i(a_i-1) &=& k(k-1)(\lambda  - m). % \quad \text{and} \\
%\sum _{i=1}^{N-m} {a_i}^2 &=& k(k(\lambda - m) + \lambda(n-1)) .
\end{eqnarray*}

Let $\alpha$ be a positive integer. Then
\begin{eqnarray*}
0 & \leq & \sum_{i=1}^{N-m} (a_i - \alpha)(a_i - \alpha - 1) \\
&=& \sum_{i=1}^{N-m} {a_i}(a_i-1) -2\alpha \sum_{i=0}^{N-m} a_i + (\alpha^2 + \alpha)(N-m) \\
&=& k(k-1)(\lambda-m) -2\alpha k(\lambda n - m) + (\alpha^2+\alpha)(\lambda n^2 -m).
\end{eqnarray*}
Solving for $m$, we obtain (\ref{modified.eq}).

In the case of equality, every term in the sum
\[ \sum_{i=1}^{N-m} (a_i - \alpha)(a_i - \alpha - 1) \] must equal $0$.
Therefore, $a_i \in \{\alpha,\alpha+1\}$ for every $i$.
\end{proof}

We examine a special case of the bound proven in Theorem \ref{repeated2.thm}.

\begin{corollary}
Let $k \geq 2$, $n \geq 2$, $\lambda \geq 1$ be integers. If $k \equiv 0 \bmod{n}$ and there is an $\OA_{\lambda}(k,n)$ containing a row  that is repeated $m$ times, then
\begin{equation}
\label{bound2.eq}
m \leq \frac{\lambda n^2}{k(n-1)+n}.
\end{equation}
\end{corollary}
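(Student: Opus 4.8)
The plan is to recognize (\ref{bound2.eq}) as the special case of the general bound (\ref{modified.eq}) obtained by choosing the free parameter $\alpha$ optimally for the divisibility hypothesis $n \mid k$. Write $k = n\ell$. The bound (\ref{modified.eq}) is tight precisely when every non-constant row has either $\alpha$ or $\alpha+1$ occurrences of $0$, so one wants the bracketing pair $\{\alpha, \alpha+1\}$ to sit where the row-zero-counts want to cluster, namely around $k/n = \ell$. The natural choice is therefore $\alpha = \ell - 1 = k/n - 1$, which places $\ell$ at the top of the bracket $\{\ell-1, \ell\}$; one checks it gives a strictly tighter bound than the neighboring choice $\alpha = \ell$. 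Because $n \mid k$, this $\alpha$ is a non-negative integer, and it is positive as soon as $k > n$.

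First I would substitute $\alpha = k/n - 1$ together with $k = n\ell$ into the numerator $k(k-1) - 2\alpha k n + (\alpha^2+\alpha)n^2$ of (\ref{modified.eq}); the $n^2\ell^2$ terms cancel and the numerator collapses to $n\ell(n-1)$. Next I would do the same for the denominator $k(k-1) - 2\alpha k + \alpha^2 + \alpha$, which factors as $\ell(n-1)\bigl(\ell(n-1)+1\bigr)$. Dividing, the common factor $\ell(n-1)$ cancels, so (\ref{modified.eq}) reduces to $m \leq \lambda n / \bigl(\ell(n-1)+1\bigr)$. Finally I would rewrite the denominator via $k = n\ell$, using $k(n-1) + n = n\bigl(\ell(n-1)+1\bigr)$, to recover exactly (\ref{bound2.eq}). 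All of this is routine algebraic simplification once the value of $\alpha$ is fixed.

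The only genuine wrinkle is the boundary case $k = n$ (that is, $\ell = 1$), where $\alpha = k/n - 1 = 0$ fails to be positive. Here the target bound is simply $m \leq \lambda$, which I would obtain either by applying the same argument with $\alpha = 0$ — the inequality $(a_i-\alpha)(a_i-\alpha-1) \geq 0$ underlying Theorem \ref{repeated2.thm} holds for every integer $\alpha$, so the derivation of (\ref{modified.eq}) survives verbatim — or, more directly, by observing that in the first two columns the ordered pair $00$ occurs exactly $\lambda$ times while each of the $m$ all-zero rows contributes one such occurrence, forcing $m \leq \lambda$. Apart from this degenerate case, the substantive content of the proof is nothing more than identifying $\alpha = k/n - 1$ as the correct parameter; the expected main obstacle is thus essentially just bookkeeping in the cancellation, rather than any real difficulty.
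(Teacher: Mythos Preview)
Your proposal is correct and follows the same route as the paper: set $k = n\ell$ (the paper writes $k = sn$), choose $\alpha = \ell - 1$ in Theorem~\ref{repeated2.thm}, and simplify. Your treatment is in fact slightly more careful than the paper's, since you explicitly handle the boundary case $k = n$ where $\alpha = 0$ falls outside the ``positive integer'' hypothesis of Theorem~\ref{repeated2.thm}; the paper's proof silently applies the formula there.
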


\begin{proof}
Write $k = sn$ for some integer $s$ and take $\alpha =s-1$ in Theorem \ref{repeated2.thm}. We find that
\begin{eqnarray*}
m &\leq &\frac{\lambda (sn(sn-1) -2(s-1)sn^2 + ((s-1)^2+(s-1))n^2)}{sn(sn-1)-2(s-1)sn + (s-1)^2 + s-1} \\
&=& \frac{\lambda (s^2n^2-sn-2s^2n^2 +2sn^2 +s^2n^2 -2sn^2 +n^2 +sn^2-n^2)}{s^2n^2-sn-2s^2n+2sn+s^2-2s+1+s-1} \\
&=& \frac{\lambda (sn^2-sn)}{s^2n^2-2s^2n+sn+s^2-s} \\
&=& \frac{\lambda sn(n-1)}{s^2(n-1)^2 + s(n-1)} \\
&=& \frac{\lambda n}{s(n-1) + 1} \\
&=& \frac{\lambda n^2}{k(n-1)+n}.
\end{eqnarray*}
\end{proof}

\begin{theorem} Suppose there is an optimal $\OA_{\lambda}(k+1,n)$ 
containing a row that is repeated $m$ times. Then $k \equiv 0 \bmod n$ 
and there is
an $\OA_{\lambda}(k,n)$ 
containing a row that is repeated $m$ times, where
(\ref{bound2.eq}) is met with equality.
\end{theorem}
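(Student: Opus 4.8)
The plan is to obtain the desired array by simply deleting one arbitrary column from the given optimal $\OA_{\lambda}(k+1,n)$, and to verify the two conclusions separately. For the congruence, I would apply Corollary \ref{a.cor} directly to the optimal $\OA_{\lambda}(k+1,n)$: that corollary forces $(k+1) \equiv 1 \bmod n$, which is exactly $k \equiv 0 \bmod n$, so the hypothesis of the bound (\ref{bound2.eq}) is satisfied. This step is immediate and requires no new work.

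For the second conclusion, the starting observation is that deleting a single column from any strength-two $\OA_{\lambda}(k+1,n)$ leaves a strength-two $\OA_{\lambda}(k,n)$: every remaining pair of columns still contains each ordered pair exactly $\lambda$ times, since the deleted column played no role in those pairs. The repeated row persists: the all-zeros row $0\;0\cdots 0$ of length $k+1$, occurring $m$ times, becomes an all-zeros row of length $k$ still occurring (at least) $m$ times. The crux is then the arithmetic identity coming from optimality of the original array: equation (\ref{m.eq}) applied with $k+1$ columns gives
\[
m = \frac{\lambda n^2}{(k+1)(n-1)+1} = \frac{\lambda n^2}{k(n-1)+n},
\]
since $(k+1)(n-1)+1 = k(n-1)+n$. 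Thus the number of repeats carried over is exactly the right-hand side of the bound (\ref{bound2.eq}).

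The one point that needs care — which I would flag as the only real subtlety rather than a genuine obstacle — is confirming equality, and in particular that deleting the column does not create \emph{additional} all-zeros rows (this would happen for any original row whose only nonzero entry lay in the deleted column). Here the upper bound does double duty: since $k \equiv 0 \bmod n$, the bound (\ref{bound2.eq}) asserts that the $\OA_{\lambda}(k,n)$ has a repeated row at most $\frac{\lambda n^2}{k(n-1)+n} = m$ times, while the surviving all-zeros rows already give at least $m$. These two inequalities pin the count at exactly $m$, simultaneously ruling out any spurious new all-zeros rows and certifying that (\ref{bound2.eq}) is met with equality. I would close by noting that the choice of which column to delete is irrelevant, since deletion preserves strength two and fixes the all-zeros row regardless.
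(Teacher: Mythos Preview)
Your proposal is correct and follows essentially the same approach as the paper: delete one column, invoke Corollary \ref{a.cor} on the $(k+1)$-column array to get $k \equiv 0 \bmod n$, and rewrite the optimality equation (\ref{m.eq}) with $k+1$ in place of $k$ to see that $m$ equals the right-hand side of (\ref{bound2.eq}). The only difference is that you explicitly address the possibility of extra all-zeros rows arising from deletion and dispose of it via the upper bound; the paper simply asserts the repeated-row count is $m$ without comment, so your version is, if anything, slightly more careful on that point.
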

\begin{proof}
Let $A$ be an optimal $\OA_{\lambda}(k+1,n)$ 
containing a row $0 \; 0 \cdots 0$ that is repeated $m$ times. 
If we delete any one column from $A$, then the resulting array $A'$ is
an $\OA_{\lambda}(k,n)$ 
containing a row $0 \; 0 \cdots 0$ that is repeated $m$ times.
Since $A$ is optimal, we have \[m = \frac{\lambda n^2}{(k+1)(n-1)+1} = \frac{\lambda n^2}{k(n-1)+n}\] 
from (\ref{m.eq}), so (\ref{bound2.eq}) is met with equality. Further, 
$\overline{a} = k/n$ must be a positive integer from Corollary \ref{a.cor}. 
\end{proof}

\subsection{Deleting multiple columns}

We now consider a different approach. 
Suppose we return to our original bound, Theorem \ref{repeated.thm}.
Since $m$ must be an integer, the following variation is immediate. 

\begin{theorem}
\label{repeated3.thm}
Let $k \geq 2$, $n \geq 2$ and $\lambda \geq 1$ be integers.
If there is an $\OA_{\lambda}(k,n)$ containing a row that is repeated $m$ times, then
\[
m \leq \left\lfloor \frac{\lambda n^2}{k(n-1)+1} \right\rfloor.
\]
\end{theorem}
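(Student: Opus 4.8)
The plan is to derive this as an immediate consequence of Theorem \ref{repeated.thm} together with the observation that $m$ is an integer. Theorem \ref{repeated.thm} already establishes the inequality
\[
m \leq \frac{\lambda n^2}{k(n-1)+1},
\]
but the right-hand side is in general a real (non-integer) number, whereas $m$, being a count of the number of times a row is repeated in the array $A$, is necessarily a nonnegative integer. So the entire content of the theorem is to sharpen a real-valued upper bound into an integer-valued one.

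The single key step is the elementary fact that if an integer $m$ satisfies $m \leq x$ for some real number $x$, then in fact $m \leq \lfloor x \rfloor$; this holds because $\lfloor x \rfloor$ is by definition the largest integer not exceeding $x$, so any integer bounded above by $x$ is bounded above by $\lfloor x \rfloor$. Applying this with $x = \frac{\lambda n^2}{k(n-1)+1}$ and with the integer $m$ supplied by the hypothesis that the $\OA_{\lambda}(k,n)$ contains a row repeated $m$ times yields the claimed bound directly.

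I do not anticipate any genuine obstacle here: once Theorem \ref{repeated.thm} is granted, the argument is a one-line strengthening that merely records the integrality constraint on $m$. The only thing worth stating explicitly is that $m$ is an integer, which is immediate from its definition as the multiplicity of a repeated row. Thus the proof will consist of citing Theorem \ref{repeated.thm}, noting $m \in \zed_{\geq 0}$, and applying the floor. If anything, the mild subtlety is purely notational, namely being careful to attribute the integrality to $m$ rather than to the fraction $\frac{\lambda n^2}{k(n-1)+1}$, which need not be an integer (and indeed is not, unless the quadruple is such that equation (\ref{m.eq}) can be met, i.e.\ the OA is optimal).
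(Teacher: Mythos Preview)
Your proposal is correct and matches the paper's approach exactly: the paper introduces this theorem with the sentence ``Since $m$ must be an integer, the following variation is immediate,'' and gives no further proof. Your write-up simply makes explicit the one-line reasoning the paper leaves implicit.
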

An orthogonal array for which the bound in Theorem 
\ref{repeated3.thm} is met with equality will be termed \emph{$m$-optimal}.

Assume we start with an optimal $\OA_{\lambda}(k,n)$, so 
\[
m =  \frac{\lambda n^2}{k(n-1)+1} .
\]
We determine how many 
columns we can remove, denoted by $s$, such that 
\begin{equation*}
m = \left\lfloor\frac{\lambda n^2}{(k-s)(n-1)+1}\right\rfloor.
\end{equation*}
The resulting  $\OA_{\lambda}(k,n)$ will be $m$-optimal.

The following numerical lemma will be useful.

\begin{lemma}
\label{s.ineq}
Suppose that \[
m =  \frac{\lambda n^2}{k(n-1)+1} 
\] is an integer 
and \begin{equation}
\label{delete.eq}
s < \frac{(k(n-1)+1)^2}{(n-1)(\lambda n^2 + k(n-1)+1)}
\end{equation}
is a positive integer. 
Then 
\begin{equation*}
m = \left\lfloor\frac{\lambda n^2}{(k-s)(n-1)+1}\right\rfloor .
\end{equation*}
\end{lemma}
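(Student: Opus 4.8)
The plan is to reduce the whole statement to a single clean inequality by tracking how the denominator changes under deletion of columns. Abbreviate $D = k(n-1)+1$, so that the integrality hypothesis reads $\lambda n^2 = mD$. Deleting $s$ columns replaces $k$ by $k-s$, giving the new denominator
\[ (k-s)(n-1)+1 = D - s(n-1), \]
which I will call $D'$. First I would check that $D' > 0$: since $m \geq 1$ forces $\lambda n^2 \geq D$, the right-hand side of (\ref{delete.eq}) is at most $D/(2(n-1))$, which is smaller than $k$ because $k(n-1) > 1$; hence $s < k$, so $k-s \geq 1$ and $D' \geq n > 0$. The quantity whose floor we must identify is then
\[ \frac{\lambda n^2}{D'} = \frac{mD}{D'}. \]

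Next I would dispose of the easy (lower) half. Because $s \geq 1$ and $n \geq 2$ give $s(n-1) > 0$ and hence $D' < D$, we have $mD/D' > m$; therefore $\lfloor mD/D'\rfloor \geq m$ with no further work. The entire content of the lemma is thus the upper bound $mD/D' < m+1$, which together with $mD/D' > m$ pins the floor to $m$.

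The main step is to show that this upper bound is exactly what the hypothesis (\ref{delete.eq}) encodes. Since $D' > 0$, the inequality $mD/D' < m+1$ is equivalent to $mD < (m+1)D'$; substituting $D' = D - s(n-1)$ and cancelling $mD$ reduces it to
\[ s(m+1)(n-1) < D, \qquad \text{that is,} \qquad s < \frac{D}{(m+1)(n-1)}. \]
It then remains only to verify that this threshold coincides with the stated bound. Using $m+1 = (\lambda n^2 + D)/D$, which follows directly from $\lambda n^2 = mD$, one rewrites
\[ \frac{D}{(m+1)(n-1)} = \frac{D^2}{(n-1)(\lambda n^2 + D)} = \frac{(k(n-1)+1)^2}{(n-1)(\lambda n^2 + k(n-1)+1)}, \]
which is precisely the right-hand side of (\ref{delete.eq}). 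Hence the hypothesis on $s$ is equivalent to $mD/D' < m+1$, and combined with $mD/D' > m$ this yields $\lfloor \lambda n^2/D' \rfloor = m$, as required.

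The only genuine obstacle is bookkeeping rather than mathematics: recognizing that the opaque-looking bound (\ref{delete.eq}) is just the transparent inequality $s(m+1)(n-1) < D$ in disguise, recovered through the substitution $m+1 = (\lambda n^2 + D)/D$. There are no delicate strict-versus-nonstrict issues, because $s$ is a positive integer, $m$ is an integer, and $D' < D$ makes both directions of the floor computation clean.
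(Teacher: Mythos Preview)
Your proof is correct and takes essentially the same approach as the paper: both establish $0 < \lambda n^2/D' - m < 1$ by rearranging the hypothesis on $s$, the paper working forward via the identity $\frac{1}{D'}-\frac{1}{D}=\frac{s(n-1)}{DD'}$ while you work backward to the equivalent form $s(m+1)(n-1)<D$. Your version is slightly more careful in that you explicitly verify $D'>0$ before dividing, which the paper leaves implicit.
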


\begin{proof}
From (\ref{delete.eq}), we find that \[s(n-1)(\lambda n^2 + k(n-1)+1) 
< (k(n-1)+1)^2.\] 
Rearranging this inequality, we see that 
\[ s\lambda n^2(n-1) < ((k-s)(n-1)+1)(k(n-1) +1),\] and therefore 
\begin{eqnarray*}
1 &> &\frac{\lambda n^2s(n-1)}{((k-s)(n-1)+1)(k(n-1) +1)} \\
%&= &\frac{\lambda n^2 (k(n-1)+1) - \lambda n^2(k(n-1)+1 - s(n-1)}{((k-s)(n-1)+1)(k(n-1)+1)} \\
&= &\lambda n^2 \left( \frac{1}{(k-s)(n-1)+1} - \frac{1}{k(n-1)+1}\right).
\end{eqnarray*}
Since $m$ is an integer, this proves that 
\begin{equation*}
m =  \frac{\lambda n^2}{k(n-1)+1} = \left\lfloor\frac{\lambda n^2}{(k-s)(n-1)+1}\right\rfloor.
\end{equation*}
\end{proof}
The following theorem is now an immediate consequence of Lemma \ref{s.ineq}.

\begin{theorem}
\label{deleted.thm}
If there is an optimal $\OA_{\lambda}(k,n)$ and $s$ is a positive integer such that
 (\ref{delete.eq}) holds, 
then there is an $m$-optimal $\OA_{\lambda}(k-s, n)$.\end{theorem}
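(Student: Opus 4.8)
The plan is to reduce the statement directly to Lemma \ref{s.ineq}, since the only combinatorial content is the observation that deleting columns from an orthogonal array preserves both its strength-two index and any repeated row; all of the arithmetic has already been carried out in that lemma.

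First I would take the given optimal $\OA_{\lambda}(k,n)$, call it $A$, and recall that by definition it contains a row of the form $0 \; 0 \cdots 0$ repeated exactly $m = \lambda n^2 / (k(n-1)+1)$ times. In particular $m$ is a positive integer, which is precisely the integrality hypothesis required by Lemma \ref{s.ineq}. I would then delete any $s$ of the $k$ columns and let $A'$ denote the resulting $\lambda n^2 \times (k-s)$ array.

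The next step is to verify that $A'$ is an $\OA_{\lambda}(k-s,n)$ still containing the repeated row. This is the only place any work occurs, and it is essentially immediate: any two columns of $A'$ are also two columns of $A$, so every ordered pair from the symbol set still occurs in exactly $\lambda$ rows, giving an orthogonal array of strength two with index $\lambda$ on $k-s$ columns. Restricting the $m$ identical all-zero rows of $A$ to the surviving columns yields $m$ identical all-zero rows of $A'$, so $A'$ retains a row repeated $m$ times. I expect this to be the only potential subtlety: one must confirm that the multiplicity of the repeated row is genuinely preserved, but since those $m$ rows were identical across all $k$ columns they remain identical across any $k-s$ of them, so the count does not drop.

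Finally, with $m$ an integer and $s$ satisfying (\ref{delete.eq}), Lemma \ref{s.ineq} gives
\[ m = \left\lfloor \frac{\lambda n^2}{(k-s)(n-1)+1} \right\rfloor, \]
which is exactly the condition that the bound of Theorem \ref{repeated3.thm} is attained for $A'$. Hence $A'$ is an $m$-optimal $\OA_{\lambda}(k-s,n)$, as required. Since the genuinely nontrivial inequality manipulation has already been discharged in Lemma \ref{s.ineq}, no real obstacle remains in this last step, and the theorem follows as an immediate consequence.
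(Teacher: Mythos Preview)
Your proposal is correct and follows essentially the same approach as the paper: delete $s$ columns from the optimal $\OA_{\lambda}(k,n)$, observe that the result is an $\OA_{\lambda}(k-s,n)$ retaining the $m$-times repeated row, and then invoke Lemma~\ref{s.ineq} to conclude $m$-optimality. The paper's argument is slightly terser but otherwise identical.
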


\begin{proof}
Let $A$ be an optimal $\OA_{\lambda}(k, n)$ containing a row 
that is repeated 
$m = \lambda n^2/(k(n-1)+1)$ times. If we delete any $s$ columns from $A$, then the resulting array $A'$ is an $\OA_{\lambda}(k-s, n)$ containing a row that is repeated $m$ times. 

From Lemma \ref{s.ineq}, we have that
\begin{equation*}
m =   \left\lfloor\frac{\lambda n^2}{(k-s)(n-1)+1}\right\rfloor,
\end{equation*}
so the resulting $\OA_{\lambda}(k-s, n)$ is $m$-optimal.
\end{proof}

To illustrate the application of Theorem \ref{deleted.thm},  we consider the case  $n = 2$. 

\begin{theorem}
\label{delete.cor}
Suppose there is a basic $\OA_{\lambda}(k,2)$ 
and suppose
\begin{equation*}
s < \frac{k+1}{3}
\end{equation*}
is a positive integer. Then there is an $m$-optimal $\OA_{\lambda}(k-s,2)$. 
\end{theorem}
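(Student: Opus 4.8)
The plan is to obtain this result as a direct specialization of Theorem~\ref{deleted.thm} to the case $n=2$. First I would instantiate the general deletion condition~(\ref{delete.eq}) at $n=2$: since $n-1=1$, $k(n-1)+1=k+1$, and $\lambda n^2 = 4\lambda$, the inequality~(\ref{delete.eq}) becomes
\[ s < \frac{(k+1)^2}{4\lambda + k + 1}. \]
It therefore suffices to verify that the hypothesis $s < (k+1)/3$ of the theorem already guarantees this inequality. Equivalently, I would show that
\[ \frac{k+1}{3} \;\leq\; \frac{(k+1)^2}{4\lambda + k + 1}, \]
since then $s < (k+1)/3$ forces $s$ below the right-hand side of~(\ref{delete.eq}).

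Clearing denominators, the displayed reduction amounts to $4\lambda + k + 1 \leq 3(k+1)$, which simplifies to $\lambda \leq (k+1)/2$. So the whole argument collapses to establishing the single bound $\lambda \leq (k+1)/2$ for every basic $\OA_{\lambda}(k,2)$. To get this, I would appeal to the structure of basic quadruples with $n=2$. By Lemma~\ref{basicn=2}, a basic $\OA_{\lambda}(k,2)$ with $m>1$ has $m=2$, $\lambda = 2t+1$ and $k=4t+1$; hence $\lambda = 2t+1 = (k+1)/2$ and the bound holds with equality. In the remaining (degenerate) case $m=1$, which has no repeated row, equation~(\ref{m.eq}) with $n=2$ gives $k+1 = 4\lambda$, so $\lambda = (k+1)/4 < (k+1)/2$. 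In either case $\lambda \leq (k+1)/2$, as needed.

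With $\lambda \leq (k+1)/2$ in hand, the chain
\[ s < \frac{k+1}{3} \;\leq\; \frac{(k+1)^2}{4\lambda + k + 1} \]
shows that~(\ref{delete.eq}) is satisfied, and Theorem~\ref{deleted.thm} immediately produces an $m$-optimal $\OA_{\lambda}(k-s,2)$. I do not expect any genuine obstacle: the argument is a short substitution plus one elementary inequality. The only point meriting care is the second paragraph, namely confirming that \emph{every} basic $\OA_{\lambda}(k,2)$ (including the $m=1$ family) obeys $\lambda \leq (k+1)/2$; this is exactly where Lemma~\ref{basicn=2} is needed. It is also worth remarking for the reader that in the principal case $m=2$ one has $\lambda = (k+1)/2$, so $4\lambda + k + 1 = 3(k+1)$ and the bound~(\ref{delete.eq}) collapses to precisely $s < (k+1)/3$; thus the stated hypothesis is tight and no slack is lost in the reduction.
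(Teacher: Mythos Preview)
Your approach is essentially the same as the paper's: specialize inequality~(\ref{delete.eq}) to $n=2$, invoke Lemma~\ref{basicn=2} to pin down $\lambda$ and $k$, and then apply Theorem~\ref{deleted.thm}. The paper simply substitutes $\lambda = 2t+1$, $k = 4t+1$ directly and computes $(k+1)^2/(4\lambda + k + 1) = (4t+2)/3 = (k+1)/3$, whereas you phrase the same computation as the inequality $\lambda \leq (k+1)/2$; these are equivalent in the principal case $m=2$, where equality holds.

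One point in your favor: you explicitly treat the degenerate family $m=1$ (where $k+1 = 4\lambda$), which the paper's proof silently omits by citing Lemma~\ref{basicn=2} even though that lemma only covers $m>1$. Your observation that $\lambda = (k+1)/4 < (k+1)/2$ in that case closes the small gap cleanly.
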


\begin{proof}
From Lemma \ref{basicn=2}, a basic $\OA_{\lambda}(k,2)$ has $\lambda = 2t+1$ and $k = 4t+1$, where $t$ is a positive integer, and $m=2$.
If we take $n = 2$, $k = 4t+1$, and $\lambda = 2t+1$ in (\ref{delete.eq}), 
the inequality becomes
\begin{eqnarray*}
s &<& \frac{(k(n-1)+1)^2}{(n-1)(\lambda n^2 + k(n-1)+1)}\\
&=& \frac{(k+1)^2}{4\lambda + k + 1} \\
&=& \frac{(4t+2)^2}{4(2t+1)+(4t+1)+1} \\
%&=& \frac{(4t+2)^2}{3(4t + 2)} \\
&=& \frac{4t+2}{3}\\
&=& \frac{k+1}{3}.
\end{eqnarray*}
The stated result now follows from Theorem \ref{deleted.thm}.
\end{proof}

\begin{corollary}
For $1 \leq s \leq 4$, there is an $m$-optimal $\OA_7(13-s,2)$.
\end{corollary}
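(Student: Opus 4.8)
The plan is to obtain this as a direct specialization of Theorem \ref{delete.cor}, using the concrete basic OA exhibited in Example \ref{OA7,13,2}. First I would record the relevant parameters. Example \ref{OA7,13,2} produces a basic $\OA_7(13,2)$, so here $\lambda = 7$ and $k = 13$; this matches Lemma \ref{basicn=2} with $t = 3$, since then $k = 4t+1 = 13$ and $\lambda = 2t+1 = 7$, and the repeated row has multiplicity $m = 2$. Thus the hypotheses of Theorem \ref{delete.cor} are satisfied: we do have a basic $\OA_7(13,2)$ to which the column-deletion construction applies.

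Next I would evaluate the admissibility threshold from Theorem \ref{delete.cor}. With $k = 13$ we get
\[
\frac{k+1}{3} = \frac{14}{3} \approx 4.67 ,
\]
so the positive integers $s$ satisfying $s < (k+1)/3$ are exactly $s \in \{1,2,3,4\}$. Applying Theorem \ref{delete.cor} for each of these values shows that deleting any $s$ columns from the basic $\OA_7(13,2)$ yields an $m$-optimal $\OA_7(13-s,2)$, which is precisely the assertion of the corollary.

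I do not expect any genuine obstacle here: the statement is an immediate application of Theorem \ref{delete.cor} to the single explicit array of Example \ref{OA7,13,2}. The only thing that needs verification is the numerical inequality $4 < 14/3 < 5$, which simultaneously guarantees that all four values $s = 1,2,3,4$ are admissible and that $s = 5$ would fall outside the range (so the corollary is sharp in the sense that the method stops at $s = 4$).
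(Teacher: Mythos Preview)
Your proposal is correct and follows essentially the same approach as the paper: start from the basic $\OA_7(13,2)$ of Example \ref{OA7,13,2} and apply Theorem \ref{delete.cor} with $k=13$, noting that $(k+1)/3 = 14/3$ so the admissible values of $s$ are exactly $1,2,3,4$. The paper's proof is just a one-line version of what you wrote.
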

\begin{proof}
Begin with a basic $\OA_7(13,2)$ (Example \ref{OA7,13,2}) and take
$k=13$ in Theorem \ref{delete.cor}.
\end{proof}

We now discuss some examples of $m$-optimal OAs from Culus and Toulouse \cite{CT}.
\begin{example}
\label{mopt.ex}
Some OAs reported in \cite[Table 4]{CT} are in fact $m$-optimal:
\begin{itemize}
\item an $\OA_3(4,2)$ with $m=2$
\item an $\OA_5(8,2)$ with $m=2$
\item an $\OA_5(6,3)$ with $m=3$.
\end{itemize}
All of these OAs can be obtained by deleting a column from a basic OA.

Finally, the $\OA_3(5,3)$ with $m=2$ that is depicted in \cite[Table 2]{CT} is also $m$-optimal.
\end{example}

We make a few observations about the inequality in Theorem \ref{repeated3.thm}.

In Section \ref{prelim.sec}, we noted that taking multiple copies of an optimal OA would yield another
optimal OA. However, a similar result does not hold for $m$-optimal OAs. To illustrate, we consider 
OAs with $k=5$ and $n=3$. For an $\OA_{\lambda}(5,3)$,  Theorem \ref{repeated3.thm} asserts that
\[ m \leq \left\lfloor \frac{9 \lambda}{11} \right\rfloor.\]
The $\OA_3(5,3)$ with $m=2$ mentioned in Example \ref{mopt.ex} is $m$-optimal because
$\lfloor \frac{27}{11} \rfloor = 2$. If we take two copies of this OA, we obtain
an $\OA_6(5,3)$ with $m=4$. This OA is   $m$-optimal because $\lfloor \frac{54}{11} \rfloor = 4$.
However, if we take three copies of the $\OA_{3}(5,3)$, we obtain
an $\OA_9(5,3)$ with $m=6$, which is not $m$-optimal because $\lfloor \frac{81}{11} \rfloor = 7 > 6$.
(We do not know if an $\OA_9(5,3)$ with $m=7$ exists.)

Finally, consider a hypothetical $OA_{11}(5,3)$. Here, Theorem \ref{repeated3.thm} yields
$m \leq  9$.  An $OA_{11}(5,3)$ with $m=9$ would be optimal, because (\ref{m.eq}) holds.
However, an optimal $OA_{11}(5,3)$ cannot exist by Corollary \ref{a.cor}, because
$5 \not\equiv 1 \bmod 3$. Therefore there is no $m$-optimal $OA_{11}(5,3)$.

\section{Summary}
\label{summary.sec}

Clearly there is much work to be done on the problem of constructing optimal and basic
orthogonal arrays. At present, we only have a couple of examples of basic OAs with $n > 2$.
Thus, it is of particular interest to construct additional examples, or better yet, infinite
classes of these arrays.

\section*{Acknowledgements}

This work benefitted from the use of the CrySP RIPPLE Facility at the University of Waterloo.

\end{document}